\newtheorem{theoremalph}{Theorem}
\newtheorem*{Theorem A}{Theorem A}
\newtheorem{Conjecture}{Conjecture}
\newtheorem{Proposition}{Proposition}[section]
\newtheorem{Lemma}[Proposition]{Lemma}
\newtheorem*{Remark}{Remark}
\newtheorem{Corollary}{Corollary}[Proposition]
 \def\NN{{\mathbb N}}
\def\dim{\operatorname{dim}}
\begin{document}

\title{Dominated chain recurrent class with singularities}
\author{
Christian Bonatti \and Shaobo Gan \and Dawei Yang\footnote{D. Yang thanks the support of NSFC 11001101 and
Ministry of Education of P. R. China 20100061120098. S. Gan is supported by 973 program 2011CB808002 and NSFC
11025101.}}

\date{}

\maketitle

%\tableofcontents

\begin{abstract}
We prove that for generic three-dimensional vector fields, domination implies singular hyperbolicity.

\end{abstract}

\section{Introduction}

One goal of differential dynamical systems is to understand the global dynamics of most dynamical systems. The
dynamics of hyperbolic vector fields are well understood. There is no good description of robustly
non-hyperbolic dynamics. A sequence of conjectures of Palis \cite{Pal00,Pal05,Pal08} gives us a beautiful
perspective. There are already many results in this direction, mainly for diffeomorphisms. If one considers
smooth vector fields, \cite{Pal08} has the following conjectures:

\begin{Conjecture}\label{Con:palis-anydimension}
Every robustly non-hyperbolic vector field can be $C^r$ approximated by a vector field with a homoclinic
tangency or a heterodimensional cycle or a singular cycle.

\end{Conjecture}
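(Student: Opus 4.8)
The plan is to argue by contraposition and to restrict to the three-dimensional setting, where the tools this paper develops are available. Suppose $X$ is a three-dimensional vector field lying in the $C^r$-interior of the complement of the closure of the set of vector fields exhibiting a homoclinic tangency, a heterodimensional cycle, or a singular cycle. I would then fix a small $C^r$-neighborhood $\cU$ of $X$ on which none of the three phenomena can be produced by perturbation, and aim to show that $\cU$ contains a hyperbolic (Axiom A, no-cycles) vector field, so that $X$ is not robustly non-hyperbolic. To do this I would pass to a residual subset of $\cU$ carrying the standard genericity consequences — Kupka--Smale, a Conley-theoretic decomposition of the chain recurrent set into chain recurrence classes, and (via a connecting-lemma argument) the identification of each such class with a homoclinic class — and analyse a single generic $Y \in \cU$.

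The first structural step is to extract domination from the absence of bifurcations. The robust lack of heterodimensional cycles should fix a well-defined stable index along each chain recurrence class, while the robust lack of homoclinic tangencies should supply the angle estimates that, through the flow analogues of the Bonatti--D\'iaz--Pujals and Wen arguments, produce a dominated splitting of the tangent flow over each class of $Y$. This reduces the conjecture to understanding dominated chain recurrence classes, exactly the objects named in the title of this paper.

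The decisive step is then to invoke the main result announced in the abstract, that for generic three-dimensional vector fields domination implies singular hyperbolicity, to conclude that every chain recurrence class of $Y$ is singular hyperbolic. A class containing no singularity is uniformly hyperbolic. If instead a nontrivial class contains a singularity $\sigma$, then $\sigma$ is accumulated by regular recurrent orbits and its invariant manifolds meet the recurrent dynamics densely; applying a connecting lemma for flows inside $\cU$, I would perturb to create an intersection of $W^u(\sigma)$ with $W^s(\sigma)$ (or the analogous connection with a periodic orbit), i.e.\ a singular cycle, contradicting the robust absence of singular cycles on $\cU$. Hence no nontrivial class carries a singularity, every class is uniformly hyperbolic, and assembling the classes yields the desired hyperbolic vector field in $\cU$.

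The hard part is the passage from domination to singular hyperbolicity, precisely the content this paper isolates. A dominated splitting controls the tangent dynamics away from singularities, but near a singularity accumulated by periodic orbits the central direction may both contract and expand, so reconciling the domination inherited from the regular orbits with the linear behaviour at the singularity — and showing the splitting extends singular-hyperbolically across it — requires genuine three-dimensional arguments on sectional area expansion and control of the central bundle. I also expect the genericity reductions to demand care: for flows the connecting lemma and the Conley decomposition interact nontrivially with singularities, and both the identification of classes with homoclinic classes and the manufacture of the singular cycle in the last step rest on this interaction. The higher-dimensional case of the conjecture, where singularities carry higher-dimensional invariant manifolds, lies beyond these techniques and I would leave it open.
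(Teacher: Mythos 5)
The statement you were given is not a theorem of this paper and the paper contains no proof of it: it is Conjecture~1, one of Palis's conjectures quoted from \cite{Pal08}. The authors only record that \cite{ArR03} established the three-dimensional $C^1$ case, and what they actually prove are the far more modest Theorems~A and~B (for $C^1$-generic three-dimensional fields, a dominated chain recurrent class of a singularity is singular hyperbolic, respectively partially hyperbolic). So there is nothing in the paper to compare your argument against, and a complete proof of the conjecture as stated would be a major new result rather than a consequence of this paper.

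Even read as an outline of the known three-dimensional case, your sketch has concrete gaps. First, the conjecture is asserted for every dimension and every $C^r$, while your tools (connecting lemma for pseudo-orbits, Franks-type perturbations, the ergodic closing lemma) are all $C^1$ and three-dimensional, and you explicitly leave the higher-dimensional case open; at best you are re-deriving \cite{ArR03}. Second, the step ``robust absence of tangencies and of heterodimensional cycles yields a dominated splitting of the \emph{tangent flow} over each chain class'' is not supplied by this paper and is genuinely delicate for flows with singularities: the index and angle arguments naturally produce domination for the linear Poincar\'e flow over the regular part, and passing from that to a dominated splitting of $\Phi_t$ over the closure of a class containing a singularity is precisely the kind of compatibility problem (flow direction versus the splitting at $\sigma$) that the lemmas of Section~3 are designed to exploit --- and those lemmas take domination of the tangent flow as a \emph{hypothesis}, they do not establish it. Third, even granting domination, Theorem~A requires the class to contain a periodic orbit; this is itself an open conjecture in the paper (attributed to \cite{Bon11}), and without it Theorem~B only yields partial hyperbolicity, so your claim that every chain recurrence class of the generic $Y$ is singular hyperbolic does not follow from the results you invoke.
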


\begin{Conjecture}\label{Con:palis-three}
Every robustly non-hyperbolic three-dimensional vector field can be $C^r$ approximated by a vector field with a
singular cycle or a Lorenz-like attractor or a Lorenz-like repeller.

\end{Conjecture}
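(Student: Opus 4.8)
The plan is to reduce Conjecture~\ref{Con:palis-three} to a case analysis on the chain recurrence classes of a generic robustly non-hyperbolic three-dimensional vector field $X$, using the main theorem of this paper — generically, domination implies singular hyperbolicity — as the central dividing tool. Working in a residual subset of $C^r$ vector fields and invoking the standard generic machinery (Conley theory for the chain recurrent set $\CR(X)$, together with Hayashi's connecting lemma and the closing lemma for flows), I would first decompose $\CR(X)$ into chain recurrence classes. Robust non-hyperbolicity forces at least one class $C$ to be robustly non-hyperbolic, and the goal becomes to $C^r$-approximate $X$ by a vector field exhibiting one of the three target phenomena inside (a continuation of) $C$.

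The first dichotomy is whether $C$ admits a dominated splitting. If it does, then the main theorem applies and, generically, $C$ is singularly hyperbolic. A singularly hyperbolic class that fails to be hyperbolic must contain a singularity whose hyperbolic indices are incompatible with the regular periodic orbits accumulating on it. If $C$ is Lyapunov stable (an attractor) this singular-hyperbolic structure is precisely that of a Lorenz-like attractor, and dually, if $C$ is a repeller, of a Lorenz-like repeller; if $C$ is neither, the robust accumulation of regular orbits on the singularity should be convertible, via the connecting lemma, into a singular cycle after a small perturbation. Thus the ``dominated'' branch is the one that the present paper is designed to settle.

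If instead $C$ carries no dominated splitting, I would try to follow the flow analogue of the perturbation results used for diffeomorphisms (in the spirit of Wen and of Gourmelon): the absence of a dominated splitting along the periodic orbits in $C$ should allow one to $C^1$-perturb so as to destroy transversality, and, because $C$ is non-hyperbolic in dimension three, the relevant orbits can be linked to a singularity. Realigning the invariant directions along such an orbit then produces a homoclinic tangency or heterodimensional configuration attached to a singularity, which after a further perturbation yields a singular cycle.

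The step I expect to be the main obstacle — and where I do not expect a routine argument — is twofold. First, controlling the interaction between regular recurrent orbits and the singularities: singular hyperbolicity furnishes a splitting along regular orbits, but matching it across a singularity and ruling out degenerate central behaviour is exactly the delicate Lorenz geometry. Second, and more seriously, making the trichotomy exhaustive and upgrading $C^1$-perturbations to $C^r$: one must exclude a residual ``non-dominated yet not approximable by a cycle'' configuration. The no-domination branch for flows \emph{with} singularities is, in full generality, genuinely open, so a complete proof of the conjecture would require ingredients beyond the singular-hyperbolicity theorem proved here; this paper should be read as establishing the decisive ``domination'' half of the program.
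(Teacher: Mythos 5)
The statement you were given is not a theorem of this paper: it is Conjecture~\ref{Con:palis-three}, quoted from Palis \cite{Pal08}, and the paper contains no proof of it (nor claims one). The authors prove only a fragment of the ``dominated'' side of the picture, namely Theorems~\ref{Thm:partialtosingular} and~\ref{Thm:dominatedtopartial}: for $C^1$-generic $X$, a non-trivial chain recurrent class of a singularity admitting a dominated splitting of the \emph{tangent} flow is partially hyperbolic, and is singular hyperbolic provided it also contains a periodic point. So there is no ``paper's own proof'' to compare yours against, and your proposal, read as a proof, has a genuine and acknowledged gap: you yourself state that the non-dominated branch for singular flows is open. A case analysis in which one of the cases is conceded to be unresolved is a research program, not a proof, and it cannot be repaired from the contents of this paper.

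Beyond that global issue, several of your intermediate steps overstate what the paper delivers. First, the conjecture is posed in the $C^r$ topology, while every perturbation tool invoked here (the ergodic closing lemma, the connecting lemma for pseudo-orbits, the Franks-type lemma of \cite{BGV06}) is strictly $C^1$; the paper's remarks on \cite{ArR03} and \cite{GaY11} make clear that even the known partial results are $C^1$ statements. Second, Theorem~\ref{Thm:partialtosingular} needs the class to contain a periodic point, and whether a generic non-trivial singular chain class must contain one is itself Conjecture~\ref{Con:periodicinclass}, still open; your dominated branch silently assumes it. Third, your sub-case ``if $C$ is neither an attractor nor a repeller, use the connecting lemma to create a singular cycle'' is backwards relative to what is actually proved: the paper cites \cite{MoP03} for the fact that a singular hyperbolic chain class with a singularity \emph{is} an attractor or a repeller, so that sub-case does not occur on the dominated branch, whereas on the non-dominated branch nothing here (or, as far as the paper indicates, anywhere) produces the required cycle. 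The honest conclusion of your last paragraph is the correct assessment; the preceding paragraphs should not be presented as steps of a proof.
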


The difficulty for vector field is the fact that hyperbolic singularities can be accumulated by recurrent
regular points. Usually in this case, one can get a homoclinic orbit of the singularity by perturbation (using
$C^1$ connecting lemmas). Then one can see a different phenomena compared with homoclinic orbits of periodic
orbits: a homoclinic orbit of a singularity cannot be transverse, thus it cannot be robust under perturbation.
The famous Lorenz attractor \cite{Lor63} gives us a very strange phenomenon: it's a robustly non-hyperbolic
chaotic attractor. To understand the hyperbolic properties of Lorenz-like attractor, \cite{MPP98,MPP04} gave the
definition of \emph{singular hyperbolic}. Moreover, for three-dimensional flows, \cite{MPP98,MoP03,MPP04}
proved:
\begin{itemize}

\item Every robustly transitive set is a singular hyperbolic attractor or repeller.

\item If $X$ cannot be accumulated by infinitely many sinks or sources, then $X$ is \emph{singular Axiom A}.

\end{itemize}

In the spirit of results for diffeomorphisms (see \cite{PuS00,BGW07,Cro11b,Cro11p,CrP11}) and vector fields, one
can have several conjectures. To be more precise, we give some definitions. Let $M$ be a compact $C^\infty$
Riemannian manifold without boundary. Let ${\cal X}^r(M)$ be the Banach space of the $C^r$ vector fields on $M$
with the usual $C^r$ norm. For $X\in{\cal X}^1(M)$, $\phi_t^X$ is the flow generated by $X$.
$\Phi_t^X=d\phi_t^X$ is the tangent flow of $X$. If there is no confusion, we will use $\phi_t$ and $\Phi_t$ for
simplicity.  For a compact invariant set $\Lambda$ of $\phi_t$, one says that $\Lambda$ has a dominated
splitting with respect to $\Phi_t$ if there is a continuous invariant splitting $T_\Lambda M=E\oplus F$, and two
constants $C>0$ and $\lambda>0$ such that for any $x\in\Lambda$ and $t>0$, one has
$$\|\Phi_t|_{E(x)}\|\|\Phi_{-t}|_{F(\phi_t(x))}\|\le C e^{-\lambda t}.$$

If $\dim E$ is a constant, then $\dim E$ is called the \emph{index} of the dominated splitting. An invariant
bundle $E\subset T_\Lambda M$ is called \emph{contracting} if there are two constants $C>0$ and $\lambda>0$ such
that for any $x\in\Lambda$ and $t>0$, one has $\|\Phi_t|_{E(x)}\|\le C e^{-\lambda t}$. An invariant bundle $F$
is called \emph{expanding} if it's contracting for $-X$. If $T_\Lambda M=E\oplus F$ is a dominated splitting and
either $E$ is contracting or $F$ is expanding, then one says that $\Lambda$ is \emph{partially hyperbolic}. We
also need the notions of singular hyperbolicity. A continuous invariant bundle $E\subset T_\Lambda M$ is called
\emph{sectional contracting} if there are two constants $C>0$ and $\lambda>0$ such that for any $x\in\Lambda$,
$t>0$ and any two-dimensional subspace $L\subset E(x)$, one has $|{\rm Det} \Phi_t|_{L}|\le C e^{-\lambda t}$. A
continuous invariant bundle $F$ is called \emph{sectional expanding} if it is sectional contracting for $-X$. A
compact invariant set $\Lambda$ is called \emph{singular hyperbolic} if $\Lambda$ has a partially hyperbolic
splitting $T_\Lambda M=E\oplus F$, and either $E$ is sectional contracting and $F$ is sectional expanding, or
$E$ is contracting and $F$ is sectional expanding. Singular hyperbolicity is a generalization of hyperbolicity
for compact invariant set with singularities, it's special for vector fields with singularities. For instance,
the classical Lorenz attractor is singular hyperbolic, but not hyperbolic (see \cite{ArP10}). Here a compact
invariant set $\Lambda$ is called \emph{hyperbolic} if there is a continuous invariant splitting $T_\Lambda
M=E^s\oplus<X>\oplus E^u$, where $E^s$ is uniformly contracting, $E^u$ is uniformly expanding, and $<X>$ is the
subspace generated by the vector field. If $\dim E^s$ is constant, then ${\rm dim}E^s$ is called the
\emph{index} (or stable index) of the hyperbolic set $\Lambda$.

We will consider compact invariant set with recurrence. The most general setting is the chain recurrence. For
any point $x,y\in M$, if for any $\varepsilon>0$, there is a sequence of points $\{x_i\}_{i=0}^n$ and a sequence
of times $\{t_i\}_{i=0}^n$ such that
\begin{itemize}

\item $x_0=x$ and $x_n=y$,

\item $t_i\ge 1$ for each $i$,

\item $d(\phi_{t_i}(x_i),x_{i+1})<\varepsilon$ for any $0\le i\le n-1$.

\end{itemize}

\noindent then one says that $x$ is in the \emph{chain stable set} of $y$. If $x$ is in the chain stable set of $y$ and
$y$ is also in the chain stable set of $x$, then one says that $x$ and $y$ are \emph{chain related}. If $x$ is chain
related with itself, then $x$ is called a \emph{chain recurrent point}. Let ${\rm CR}(X)$ be the set of chain
recurrent points of $X$. It's clear that chain related relation is an equivalent relation. By using this
equivalent relation, one can divide ${\rm CR}(X)$ into equivalent classes. Each equivalent class is called a
\emph{chain recurrent class}. A chain recurrent class is called \emph{non-trivial} if it's not reduced to a
periodic orbit or a singularity.

$X$ is called \emph{singular Axiom A without cycle} if $X$ has only finitely many chain recurrent classes, and
each chain recurrent class is singular hyperbolic. Singular Axiom A vector fields is a generalization of
hyperbolic vector fields. One says that $X$ has a \emph{homoclinic tangency} if $X$ has a hyperbolic periodic
orbit $\gamma$, and $W^s(\gamma)$ and $W^u(\gamma)$ have some non-transversal intersection. One says that $X$
has a \emph{heterodimensional cycle} if $X$ has two hyperbolic periodic orbits $\gamma_1$ and $\gamma_2$ with
different indices such that $W^s(\gamma_1)\cap W^u(\gamma_2)\neq\emptyset$ and $W^u(\gamma_1)\cap
W^s(\gamma_2)\neq\emptyset$.

In the spirit of Palis and the previous results, one has the following conjectures:

\begin{Conjecture}\label{Con:weakPalis}

Every vector field can be $C^r$ approximated by a vector field with a horseshoe or by a Morse-Smale vector
field.

\end{Conjecture}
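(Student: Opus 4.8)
The plan is to establish the dichotomy by contraposition: assuming a vector field $X$ cannot be $C^1$-approximated by one possessing a horseshoe, I would show that $X$ lies in the closure of the Morse--Smale vector fields. I would work throughout in the $C^1$ topology, where connecting lemmas and cocycle-perturbation machinery are available, and inside a suitable residual subset of ${\cal X}^1(M)$ of three-dimensional flows, so that the main theorem of this paper --- that a dominated chain recurrent class is singular hyperbolic --- applies to every chain recurrent class of $X$. (For the general-dimensional statement one would need a higher-dimensional analogue of that theorem; I restrict to the setting where it is available.)

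First I would try to promote the absence of horseshoes into a global structure, using the heuristic that complicated recurrence forces a horseshoe unless it is tamed by domination. Concretely, I would argue that if some chain recurrent class $C$ carries no dominated splitting with respect to the linear Poincar\'e flow along its regular orbits, then by $C^1$ perturbation --- using Hayashi-type connecting lemmas to close recurrent orbits and Ma\~n\'e/Gourmelon-style cocycle perturbations to destroy the weak domination --- one can create a homoclinic tangency associated with a hyperbolic periodic orbit in $C$. Unfolding such a tangency produces a horseshoe, contradicting the hypothesis. Hence every chain recurrent class of the generic $X$ must admit a dominated splitting.

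Next I would invoke the main theorem to conclude that each such class is singular hyperbolic, and then observe that a \emph{non-trivial} singular hyperbolic class necessarily contains a horseshoe: the sectional-expanding two-dimensional bundle forces positive topological entropy on the regular recurrent part, and a shadowing/closing argument yields a hyperbolic periodic orbit carrying a transverse homoclinic orbit. Thus, under the no-horseshoe assumption, every chain recurrent class is trivial --- a single periodic orbit or a single singularity. A trivial singular hyperbolic class is simply a hyperbolic periodic orbit or a hyperbolic singularity, so all of $\CR(X)$ consists of hyperbolic critical elements.

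It then remains to pass from ``all critical elements hyperbolic'' to Morse--Smale. I would rule out cycles: a cycle between critical elements of distinct indices is a heterodimensional cycle, while one between equal indices yields a non-transverse connection; both unfold into horseshoes, so the hypothesis forbids them. Combined with a finiteness argument --- the absence of cycles together with the partial hyperbolicity of the trivial classes prevents accumulation of critical elements, so $\CR(X)$ splits into finitely many classes --- and with genericity (Kupka--Smale transversality of invariant manifolds), this gives that $X$ is Morse--Smale up to arbitrarily small $C^1$ perturbation. The main obstacle I anticipate is the very first step \emph{at the singularities}: the linear Poincar\'e flow degenerates as regular orbits approach a singularity, so obtaining and exploiting the dominated splitting uniformly across the singular set --- and perturbing there without violating the connecting-lemma hypotheses --- is exactly the delicate, flow-specific analysis that constitutes the real content of this paper.
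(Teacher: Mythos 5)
The statement you were given is Conjecture~\ref{Con:weakPalis}. It is an open conjecture, not a theorem of this paper: the authors offer no proof, and only record that the three-dimensional, $C^1$ case is established in the separate work \cite{GaY11} (listed as ``in preparation''). Here the conjecture serves as motivation for Theorems~\ref{Thm:partialtosingular} and~\ref{Thm:dominatedtopartial}. So there is no proof in the paper to compare against, and your text must be judged as a research programme rather than as a proof.

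As such it contains genuine gaps. First, you silently replace $C^r$ by $C^1$ and restrict to $\dim M=3$, while the conjecture is stated for every $r$ and every dimension; all the machinery you invoke (connecting lemmas, Franks/Gourmelon-type cocycle perturbations, the theorems of this paper) is strictly $C^1$ and three-dimensional. Second, the pivotal claim that ``a non-trivial singular hyperbolic class necessarily contains a horseshoe'' is false as stated: a homoclinic loop of a Lorenz-like singularity is a non-trivial, chain transitive, singular hyperbolic set with zero topological entropy and no periodic orbit whatsoever, so no entropy or shadowing argument can produce a transverse homoclinic orbit inside it --- one must instead perturb the loop, which is precisely the delicate singular bifurcation analysis you are trying to avoid. (Genericity does not rescue this step: the assertion that a non-trivial class of a singularity of a generic field contains a periodic orbit is itself only Conjecture~\ref{Con:periodicinclass}.) Third, Theorem~\ref{Thm:partialtosingular} requires the class to contain a periodic point, so for aperiodic classes with singularities your passage from domination to singular hyperbolicity has no support, and your very first step (no domination implies a tangency after perturbation) is only known for homoclinic classes of periodic orbits, not for classes accumulating a singularity, where the linear Poincar\'e flow degenerates. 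Finally, the step from ``all critical elements hyperbolic and no cycles'' to finiteness of the chain recurrent classes and to Morse--Smale is essentially the (still open) conjecture of \cite{ZGW08} on star flows and cannot be dispatched as ``a finiteness argument.'' You correctly identify the degeneracy at singularities as the central obstacle, but each of the steps above is an additional, unresolved one.
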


\begin{Conjecture}\label{Con:strongPalis}

Every vector field can be $C^r$ approximated by one of the following three kinds of vector fields:
\begin{itemize}

\item a vector field which is singular Axiom A without cycle,

\item a vector field with a homoclinic tangency,

\item a vector field with a heterodimensional cycle.

\end{itemize}

\end{Conjecture}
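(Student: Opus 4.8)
The plan is to establish the following form of the main assertion: there is a residual set $\cR\subseteq\cX^1(M)$ (with $\dim M=3$) such that for every $X\in\cR$, every chain recurrent class $C$ of $X$ carrying a dominated splitting $T_C M=E\oplus F$ with respect to $\Phi_t$ is singular hyperbolic. By replacing $X$ with $-X$ it suffices to treat the index-one case $\dim E=1$, $\dim F=2$, for which the target is that $E$ is contracting and $F$ is sectional expanding. If $C$ contains no singularity the statement reduces to the hyperbolicity of non-singular dominated classes, which follows from the flow analogue of the generic star/tame-class theory; so the genuinely new content is the case where $C$ contains a hyperbolic singularity.

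First I would fix the local structure at the singularities. By Kupka--Smale genericity every $\sigma\in\mathrm{Sing}(X)\cap C$ is hyperbolic, and the existence of the dominated splitting $E\oplus F$ forces its eigenvalues into a Lorenz-like configuration compatible with $E\oplus F$; in particular $E$ must sit inside the strong stable direction and the flow line must lie in $F$ away from $\sigma$. This pins down the geometry of $W^s(\sigma)$ and $W^u(\sigma)$ relative to the splitting, and is what will later let the splitting be continued across $\sigma$.

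Next I would pass to the normal (Poincar\'e) dynamics. On $C\setminus\mathrm{Sing}(X)$ form the normal bundle $N=TM/\langle X\rangle$ and the linear Poincar\'e flow $\psi_t$, and project $E\oplus F$ to a splitting $N=N^s\oplus N^u$. The domination of $\Phi_t$ descends to a domination of $\psi_t$ on the regular part, but only after rescaling by $\|X\|$, since the flow speed collapses near $\sigma$. The crux is then to upgrade non-uniform to uniform behaviour using genericity: if $E$ failed to be contracting, Ma\~n\'e's ergodic closing lemma together with the connecting lemma would produce periodic orbits, or ergodic measures approximated by them, carrying a nonnegative Lyapunov exponent along $E$; these can be perturbed to create sinks or to change the index inside $C$, contradicting the generic uniformity of indices in a tame chain recurrent class. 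This forces $E$ to be uniformly contracting, and a symmetric argument on the determinant of $\psi_t$ along $N^u$ yields sectional expansion of $F$ on the regular part.

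The main obstacle --- exactly the difficulty highlighted in the introduction --- is the accumulation of recurrent regular orbits onto the hyperbolic singularity, where $\|X\|\to 0$ and the linear Poincar\'e flow degenerates, so that uniform domination on the regular part does not automatically extend up to $\sigma$. I would handle this with the \emph{extended} (rescaled) linear Poincar\'e flow defined on the unit normal bundle, a blow-up of $\mathrm{Sing}(X)$: there the splitting $N^s\oplus N^u$ extends continuously to the fibres over $\sigma$, matching the Lorenz-like eigendirections fixed in the first step, and the contraction of $E$ and the sectional expansion of $F$ then extend by continuity and compactness to uniform estimates on all of $C$. Combining the uniform contraction of $E$ with the sectional expansion of $F$ gives partial hyperbolicity, and hence singular hyperbolicity, completing the argument.
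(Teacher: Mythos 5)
The statement you are trying to prove is Conjecture~\ref{Con:strongPalis}, which the paper explicitly leaves open: the authors say their main results (Theorems~\ref{Thm:partialtosingular} and \ref{Thm:dominatedtopartial}) ``deal with a special case'' of it. There is no proof of this conjecture in the paper to compare against, and your proposal does not supply one. What you have sketched is, at best, an outline of the special case actually proved: that for $C^1$ generic three-dimensional vector fields, a chain recurrent class which \emph{already carries} a dominated splitting for the tangent flow is singular hyperbolic. The conjecture is a much stronger trichotomy, stated for every dimension and every $C^r$: one must show that a vector field which cannot be approximated by one with a homoclinic tangency or a heterodimensional cycle can be approximated by a \emph{singular Axiom A without cycle} vector field. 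Your first paragraph silently replaces this with ``every dominated chain recurrent class of a generic $3$-dimensional field is singular hyperbolic,'' which skips the three hardest steps: (i) producing the dominated splitting in the first place from the absence of tangencies (for flows with singularities this is not known; the linear Poincar\'e flow degenerates at $\mathrm{Sing}(X)$ and one cannot simply quote the diffeomorphism theory of Wen/Crovisier); (ii) proving that there are only \emph{finitely many} chain recurrent classes, which is itself Conjecture~\ref{Con:tame} of the paper and is open; and (iii) the no-cycle condition. Without these, singular hyperbolicity of each individual dominated class does not yield singular Axiom A.

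Even judged as a sketch of the special case, the proposal has gaps relative to what the paper actually does. The paper's Theorem~\ref{Thm:partialtosingular} needs the extra hypothesis that $C(\sigma)$ contains a periodic point $p$; this is used crucially in the final contradiction (the sinks produced by the ergodic closing lemma and Franks' lemma would have to accumulate on $p$, which is impossible because $W^s(W^u_{loc}(\mathrm{Orb}(p)))$ is open and contains no sinks by Lyapunov stability of the class). You never use, or even mention, a periodic orbit inside the class, and your appeal to ``generic uniformity of indices in a tame chain recurrent class'' assumes tameness, which is not available. Your reduction ``replace $X$ by $-X$ to assume $\dim E=1$'' also glosses over the actual mechanism: the paper first proves (Lemma~\ref{Lem:flowdirection} and its corollary) that the flow direction lies in one of the two bundles and that this, combined with $\lambda$-lemma arguments at the singularity, forces $\dim E=1$ and all singularities to have the same index; this is where the work is, not a formal symmetry. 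Finally, the exclusion of the ``Lorenz-like repeller inside an attracting class'' configuration rests on Lemma~\ref{Lem:sectionalcontractingisolated} (a saddle-value argument showing $I(\sigma')>0$), which has no counterpart in your outline. In short: the statement is an open conjecture, and the proposal neither proves it nor correctly reconstructs the proof of the special case the paper establishes.
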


These two conjectures could be viewed as the continuations of the conjectures of Palis for flows. If $\dim M=3$,
Conjecture \ref{Con:strongPalis} was given by \cite{MoP03}. One notices that $X$ is called \emph{star} if there
is a neighborhood ${\cal U}$ of $X$ such that every critical element of $Y\in\cal U$ is hyperbolic. \cite{ZGW08}
conjectured that every star vector field is singular Axiom A without cycle. If Conjecture \ref{Con:strongPalis}
is true, one will have that every generic star vector field is singular Axiom A without cycle.

The main result of this work deals with a special case of Conjecture~\ref{Con:strongPalis}: it concerns when one
can get singular hyperbolicity for three-dimensional vector fields. To avoid some pathological phenomena, one
consider \emph{residual set} of ${\cal X}^r(M)$: it contains a countable intersection of dense open subset of
${\cal X}^r(M)$. Since ${\cal X}^r(M)$ is complete, one has every residual set is dense in ${\cal X}^r(M)$. A
residual set of ${\cal X}^r(M)$ is also called a dense $G_\delta$ set.

\begin{theoremalph}\label{Thm:partialtosingular}

Assume that $\dim M=3$. There is a dense $G_\delta$ set ${\cal G}\subset {\cal X}^1(M)$ such that if $X\in{\cal G}$
and $C(\sigma)$ is a non-trivial chain recurrent class of a singularity $\sigma$ with the following
properties:
\begin{itemize}

\item $C(\sigma)$ contains a periodic point $p$,

\item $C(\sigma)$ admits a dominated splitting $T_{C(\sigma)}M=E\oplus F$ with respect to $\Phi_t$,

\end{itemize}
then $C(\sigma)$ is singular hyperbolic. As a corollary, $C(\sigma)$ is an attractor or a repeller depending on
the index of $\sigma$.

\end{theoremalph}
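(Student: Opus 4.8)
The plan is to fix a generic set $\mathcal{G}$ encoding the usual $C^1$-generic tools and then to upgrade the dominated splitting to singular hyperbolicity in stages. Replacing $X$ by $-X$ if necessary (which interchanges the roles of $E$ and $F$ and of contracting/expanding, and interchanges attractors with repellers), I may assume the dominating bundle $F$ is two-dimensional, so $\dim E = 1$; the goal then becomes to show that $E$ is uniformly contracting and $F$ is sectional expanding, whence $C(\sigma)$ is an attractor. I take $\mathcal{G}$ to consist of Kupka--Smale vector fields for which: chain recurrence classes depend semicontinuously on $X$; the class of a singularity carrying a periodic orbit coincides with a homoclinic class (via the $C^1$ connecting lemma and the Bonatti--Crovisier theory); and Ma\~n\'e's ergodic closing lemma and Franks' lemma are available, so that every invariant measure on $C(\sigma)$ is approximated, in both the weak-$*$ topology and in Lyapunov exponents, by periodic orbits contained in $C(\sigma)$.

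First I would pin down the index and the type of the singularity. Using the domination $E \oplus F$ together with Franks' lemma, I would show that every periodic orbit in $C(\sigma)$ has stable index $\dim E = 1$: a periodic orbit of a different index could be perturbed to violate the uniform domination. The same domination, restricted to $T_\sigma M$, refines the hyperbolic splitting of $\sigma$ so that $E_\sigma$ is its strongest stable direction while $F_\sigma$ carries the weak stable and the unstable eigendirections; the eigenvalue inequalities forced by $E \oplus F$ are exactly those making $\sigma$ Lorenz-like.

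Next I would establish partial hyperbolicity by showing the one-dimensional bundle $E$ is uniformly contracting. If it were not, a Ma\~n\'e-type contraction criterion would furnish an invariant measure with nonnegative exponent along $E$; approximating this measure by periodic orbits in the class via the ergodic closing lemma would produce periodic orbits with nonnegative central exponent, contradicting both the index computation and the uniform strength of the domination over periodic orbits. This settles the contracting half of singular hyperbolicity.

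The crux, and the step I expect to be the main obstacle, is the sectional expansion of the two-dimensional bundle $F$. The difficulty is that orbits in $C(\sigma)$ spend arbitrarily long times near $\sigma$, where $\Phi_t$ contracts the weak-stable direction inside $F$, so area on $F$ could a priori shrink during these sojourns. To control this I would exploit the Lorenz-like geometry from the second step: the class should meet the strong stable manifold $W^{ss}(\sigma)$ (tangent to $E$) only at $\sigma$, so that orbits enter and leave a neighborhood of $\sigma$ transversally to $W^{ss}(\sigma)$, and the local linearization then expands area on $F$ because $\lambda_{\mathrm{weak}} + \lambda_{\mathrm{unstable}} > 0$. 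Combining this local gain near $\sigma$ with the sectional expansion of $F$ over periodic orbits (guaranteed by their index and exponents) and the uniform domination away from $\sigma$, I would assemble a uniform area-expansion estimate for $\Phi_t|_F$ over the whole class. Verifying that the class indeed avoids $W^{ss}(\sigma)\setminus\{\sigma\}$, and that the expansion gained outside and near $\sigma$ overrides any loss during long sojourns, is where the real work lies.
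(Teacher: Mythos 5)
Your reduction to $\dim E=1$, the identification of $E$ as the contracting bundle, and the determination of the index of $\sigma$ are all consistent with the paper (the paper gets the contraction of $E$ more directly, from the domination of $E$ by the flow direction $\langle X\rangle\subset F$ together with returns to a region where $|X|$ is bounded below, rather than via measures), and your observation that the saddle value of $\sigma$ must be positive matches Lemma~\ref{Lem:sectionalcontractingisolated}. The genuine gap is exactly the step you flag yourself: the sectional expansion of $F$. Your plan is a direct geometric argument -- show the class meets $W^{ss}(\sigma)$ only at $\sigma$, linearize near the Lorenz-like singularity, and glue the local area gain to estimates away from $\sigma$. Neither half of this is carried out, and neither is routine: the fact that $C(\sigma)\cap W^{ss}(\sigma)=\{\sigma\}$ is itself a substantial theorem in the singular-hyperbolic literature, and even granting it there is no a priori reason that expansion ``overrides any loss'' along arbitrary orbits -- there could be an invariant measure with nonpositive area exponent on $F$ that is not detected by periodic orbits or by the local picture at $\sigma$. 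In the $C^1$ setting one cannot expect to win by pointwise estimates alone.

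The paper avoids this entirely by arguing by contradiction. If $F$ is not sectionally expanding, an invariant measure with nonpositive area exponent on $F$ exists; the ergodic closing lemma for flows plus Franks' lemma then produce a sequence of \emph{sinks} $P_n$ of nearby vector fields, hence (by genericity) of $X$ itself, accumulating on a compact set $\Lambda\subset C(\sigma)$ (Lemma~\ref{Lem:approximatedbysink}; note the hypothesis $I(\sigma')>0$ is used only to guarantee the measure is not concentrated on singularities). These sinks are then excluded by a dichotomy: if $\Lambda$ misses the singularities, it is hyperbolic by the Pujals--Sambarino-type Lemma~\ref{Lem:dominatedwithoutsingularity} (a $C^2$ argument your proposal has no analogue of) and cannot be accumulated by sinks; if $\Lambda$ contains a singularity, then by density of the unstable separatrices $\Lambda=C(\sigma)$, and the Lyapunov stability of $C(\sigma)$ forces $W^s\bigl(W^u_{loc}(\mathrm{Orb}(p))\bigr)$ to be a neighborhood of the periodic orbit $p$ contained in the basin of $C(\sigma)$, hence free of sinks. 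This last step is the only place the hypothesis that $C(\sigma)$ contains a periodic point is used, and your proposal never exploits $p$ at this stage -- which is a sign that the route you sketch would have to prove something strictly stronger than the theorem. As written, the proposal does not constitute a proof of the sectional expansion.
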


About the condition that the chain recurrent class contains a periodic point, \cite{Bon11} has the following
conjecture:
\begin{Conjecture}\label{Con:periodicinclass}
For generic vector field $X$, if $C(\sigma)$ is a non-trivial chain recurrent class containing a hyperbolic
singularity $\sigma$, then $C(\sigma)$ contains a hyperbolic periodic orbit.

\end{Conjecture}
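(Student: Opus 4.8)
The plan is to reduce the conjecture to producing a single periodic point inside $C(\sigma)$: once such a point exists, Kupka--Smale genericity makes it hyperbolic, and the generic coincidence of chain recurrence classes with homoclinic classes (Bonatti--Crovisier) guarantees that it sits in $C(\sigma)$ together with a whole homoclinic class, giving the desired hyperbolic periodic orbit. The first real step is to exploit non-triviality. Because $C(\sigma)$ is not reduced to $\{\sigma\}$, chain transitivity forces genuine chain loops based at $\sigma$ that pass through regular points; equivalently, some branch of the unstable manifold $W^u(\sigma)$ chain-accumulates on some branch of $W^s(\sigma)$. I would record that $C(\sigma)$ therefore contains regular chain-recurrent points arbitrarily near $\sigma$, so that the recurrence we must capture is not confined to the singularity itself.

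The second step is to turn chain recurrence into genuine recurrence by means of the connecting lemma for pseudo-orbits. For $C^1$-generic $X$, Crovisier's theorem produces a sequence of periodic orbits $\gamma_n$ Hausdorff-converging to $C(\sigma)$; since $\sigma\in C(\sigma)$, these $\gamma_n$ pass arbitrarily close to $\sigma$, entering roughly along $W^s(\sigma)$ and exiting along $W^u(\sigma)$. The crux is then to upgrade ``Hausdorff limit of periodic orbits'' to ``contains a periodic orbit chain-related to $\sigma$''. My attempt would be to graft a long piece of some $\gamma_n$ onto the homoclinic chain loop at $\sigma$ via the pseudo-orbit connecting lemma, producing after a $C^1$ perturbation a genuine periodic orbit $\gamma$ that shadows both $\gamma_n$ and one excursion through a small neighborhood of $\sigma$; such a $\gamma$ is chain-related to $\sigma$, hence lies in the chain class of the perturbed field. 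A semicontinuity argument, choosing $X$ in a residual set on which $Y\mapsto C(\sigma_Y)$ and $Y\mapsto\overline{\mathrm{Per}(Y)}$ are as continuous as possible, would be used to transport the periodic orbit back to the original $X$.

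The main obstacle --- and the reason the statement remains only a conjecture --- is the singularity itself. Any orbit that genuinely realizes the recurrence through $C(\sigma)$ must pass arbitrarily close to $\sigma$, where the flow decelerates and the transit time blows up; this is precisely the regime in which Pugh's closing lemma and the connecting lemma lose their quantitative control, so one cannot freely close an orbit into a periodic one, and the naive attempt converges, in the limit, to the non-robust homoclinic loop at $\sigma$ rather than to a periodic orbit inside the class. Worse, perturbations supported near $\sigma$ interact with the eigendata of the singularity, and whether the unfolding of a singular homoclinic loop creates a periodic orbit at all depends on the eigenvalue configuration (Lorenz-like versus saddle-attracting). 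Circumventing this seems to demand replacing the closing argument by the linear Poincar\'e flow and sectional-hyperbolicity machinery behind Theorem~\ref{Thm:partialtosingular}, which in turn presupposes a dominated splitting that the present conjecture does not grant; bridging that gap --- controlling the passage near $\sigma$ without assuming domination --- is where I expect the argument to be hardest and where genuinely new input is required.
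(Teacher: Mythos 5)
There is no proof of this statement in the paper to compare your attempt against: Conjecture~\ref{Con:periodicinclass} is stated as an open conjecture (attributed to \cite{Bon11}), and the paper uses it only as motivation for the hypothesis in Theorem~\ref{Thm:partialtosingular} that $C(\sigma)$ contains a periodic point. Your text is, by its own admission, also not a proof, so the right assessment is of where your sketch breaks down --- and your diagnosis of the breaking point is essentially correct, and matches the difficulty the paper itself flags in its introduction.

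To name the gap concretely: the fatal step is ``transport the periodic orbit back to the original $X$ by semicontinuity.'' The connecting lemma for pseudo-orbits does give you, for each small $\varepsilon$, a perturbed field $Y$ with a periodic orbit $\gamma_Y$ shadowing a chain loop through a neighborhood of $\sigma$; the problem is that $\gamma_Y$ is chain-related to $\sigma_Y$ only for $Y$, the map $Y\mapsto C(\sigma_Y)$ is merely upper semicontinuous, and Lemma~\ref{Lem:generic}(3) only recovers for $X$ a sequence of periodic orbits of $X$ \emph{Hausdorff-converging to} a subset of $C(\sigma)$ --- it never puts a periodic orbit \emph{inside} the class. The standard mechanism for upgrading ``created by perturbation'' to ``persists in the class'' is a transverse homoclinic connection, which is robust and forces the orbit into the homoclinic class; but, as the introduction of the paper stresses, a homoclinic orbit of a singularity cannot be transverse and hence cannot be robust, so the connection you graft near $\sigma$ can be destroyed by the very next perturbation, and in the limit your construction degenerates to the non-robust singular loop rather than to a periodic orbit in $C(\sigma)$. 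The transit times of the $\gamma_n$ near $\sigma$ blow up, which is exactly the regime where the quantitative control in the closing and connecting lemmas fails; and without a dominated splitting (which the conjecture does not grant, unlike Theorems~\ref{Thm:partialtosingular} and~\ref{Thm:dominatedtopartial}) there is no Liao-type or linear-Poincar\'e-flow estimate to control the passage. So your sketch correctly reduces the conjecture to the one step that is genuinely open, but supplies no new mechanism for that step; it should be read as an informed account of why the statement is a conjecture, not as progress toward proving it.
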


If we don't assume that $C(\sigma)$ contains a periodic point, we will get a partially hyperbolic splitting.

\begin{theoremalph}\label{Thm:dominatedtopartial}

Assume that $\dim M=3$. There is a dense $G_\delta$ set ${\cal G}\subset {\cal X}^1(M)$ such that if $X\in{\cal G}$
 and $C(\sigma)$ is a non-trivial chain recurrent class of a singularity $\sigma$ with a dominated
splitting $T_{C(\sigma)}M=E\oplus F$ with respect to $\Phi_t$, then $C(\sigma)$ is partially hyperbolic.

\end{theoremalph}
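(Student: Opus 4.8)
The plan is to start from the dominated splitting and \emph{upgrade} it to partial hyperbolicity by proving that one of the two extremal bundles is uniformly hyperbolic for the tangent flow $\Phi_t$. Since $\dim M=3$, the splitting $T_{C(\sigma)}M=E\oplus F$ has $(\dim E,\dim F)$ equal to $(1,2)$ or $(2,1)$, and replacing $X$ by $-X$ exchanges the two cases together with the words ``contracting'' and ``expanding''. So I would reduce, without loss of generality, to the case $\dim E=1$, $\dim F=2$ and aim to show that $E$ is contracting; this is exactly what partial hyperbolicity asks for in this configuration. A preliminary observation is that on the regular part of $C(\sigma)$ the flow direction $\langle X\rangle$ is $\Phi_t$-invariant and ``neutral'', so by domination it must lie in the two-dimensional bundle $F$ (if it lay in the one-dimensional $E$, then $E=\langle X\rangle$ could not be contracted on compact pieces of orbit bounded away from $\sigma$, and one would instead work with the symmetric configuration). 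Thus $F=\langle X\rangle\oplus F'$ over regular points and $E$ plays the role of a candidate strong-stable bundle.

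Before the main estimate I would fix the residual set $\cG$ by the standard $C^1$ generic tools: Kupka--Smale (all singularities and periodic orbits hyperbolic), the $C^1$ connecting and closing lemmas, Ma\~n\'e's ergodic closing lemma, and the generic continuity of the map $X\mapsto C(\sigma)$ together with Bonatti--Crovisier type results that let one approximate invariant measures supported in $C(\sigma)$ by periodic orbits \emph{contained in} the class. I would also record the structure of $\sigma$: being hyperbolic and carrying the restricted dominated splitting $E(\sigma)\oplus F(\sigma)$, its eigenvalues are forced into a \emph{Lorenz-like} configuration, with $E(\sigma)$ the strong (most contracted) eigendirection and $F(\sigma)$ spanned by the remaining eigendirections. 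A crucial qualitative input to extract from genericity is that the class meets the strong stable manifold of $\sigma$ only at $\sigma$, i.e. $W^{ss}(\sigma)\cap C(\sigma)=\{\sigma\}$; this prevents regular orbits of the class from approaching $\sigma$ tangentially to the $E$-direction, which is the only way a continuous extension of $E$ to $\sigma$ could be spoiled.

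To pass from the tangent flow to manageable estimates I would introduce the linear Poincar\'e flow $\psi_t$ on the normal bundle $N$ over the regular part, and its \emph{extended} version over the blow-up of $\sigma$ (the unit-tangent, or projective, bundle), so that the singularity can be treated on the same footing as regular orbits. The splitting $E\oplus F$ projects to a dominated splitting $N=N_E\oplus N_F$ of the two-dimensional normal bundle for $\psi_t$, with both summands one-dimensional. The heart of the proof is then to show that $N_E$, equivalently $E$, is uniformly contracted. I would argue by contradiction: if $E$ is not contracting, then by Pliss's lemma and Liao's selecting lemma there are regular orbit segments along which the integrated logarithmic expansion of $E$ is nonnegative; feeding these into the ergodic closing lemma together with the genericity above produces a periodic orbit inside $C(\sigma)$ whose Lyapunov exponent along $E$ is $\ge 0$. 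Combined with the domination of $E$ by $F$ and with the hyperbolic estimates at the Lorenz-like $\sigma$, such an orbit can be perturbed to create a sink (or a non-hyperbolic periodic orbit) inside the chain class, contradicting either the hyperbolicity of critical elements of $X\in\cG$ or the non-triviality and chain-transitivity of $C(\sigma)$.

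I expect the main obstacle to be precisely the interaction with the singularity, not the regular dynamics. Away from $\sigma$ the neutrality of the flow direction inside $F$ already makes domination force $\|\Phi_t|_{E(x)}\|$ small, but near $\sigma$ the flow speed $\|X\|$ degenerates, so the reparametrization between ``flow time'' and ``Poincar\'e time'' destroys this elementary estimate; controlling the long excursions of orbits of the class that pass close to $\sigma$ is the delicate point. This is exactly why the extended linear Poincar\'e flow and the fact $W^{ss}(\sigma)\cap C(\sigma)=\{\sigma\}$ are needed: they let me show that an orbit entering a small neighborhood of $\sigma$ must do so away from the strong-stable direction, so the hyperbolic estimates of the linearization at $\sigma$ dominate the loss of flow speed and the uniform contraction of $E$ survives each passage near the singularity. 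Establishing this uniformity across all such passages, and verifying that the periodic orbits produced really lie in $C(\sigma)$ rather than merely near it, are the two steps I would expect to require the most care.
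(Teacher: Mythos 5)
Your structural reductions are the right ones (flow direction forced into the two\-/dimensional bundle, $\dim E=1$, $E(\sigma')\subset E^s(\sigma')$ at every singularity of the class), but the engine you propose for proving that $E$ is contracting is both far heavier than necessary and has a genuine gap at its key step. The paper's proof is essentially the elementary observation you set aside as breaking down near $\sigma$: since $X(\phi_t(x))\in F(\phi_t(x))$ and $\Phi_{-t}(X(\phi_t(x)))=X(x)$, domination gives
$$\|\Phi_{t}|_{E(x)}\|\;\le\; Ce^{-\lambda t}\,\frac{|X(\phi_{t}(x))|}{|X(x)|},$$
which involves the flow speed only at the two endpoints, not along the excursion. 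Taking $t=t_n$ to be return times of the orbit to a small neighborhood of a regular point $y\in\omega(x)$ makes the ratio bounded, so the right-hand side tends to $0$ no matter how close the orbit comes to $\sigma$ in between; the remaining case $\omega(x)\subset{\rm Sing}(X)$ is handled because $E(\sigma')$ is the strong stable eigendirection, and pointwise eventual contraction plus compactness gives uniformity (Lemma~\ref{Lem:onebundlehyperbolic}). The reparametrization difficulty you flag is an artifact of the linear Poincar\'e flow formulation you chose, not of the tangent-flow estimate, so neither the extended linear Poincar\'e flow nor the statement $W^{ss}(\sigma)\cap C(\sigma)=\{\sigma\}$ --- which you assert as ``extractable from genericity'' but never prove, and which the paper does not need for Theorem~\ref{Thm:dominatedtopartial} --- is required.

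The concrete gap in your contradiction scheme is the perturbation step. A periodic orbit (or measure) with $E$-exponent $\ge 0$ cannot in general be turned into a sink by a small Franks-type perturbation, because domination only bounds the $F$-exponents \emph{from below}; they may be large and positive, and a non-hyperbolic periodic orbit of a nearby $Y$ by itself contradicts nothing about the generic $X$. Moreover the contradiction you seek is available one step earlier, making the whole machine moot: an ergodic measure on the regular part has zero Lyapunov exponent in the flow direction, which lies in $F$, so domination forces its $E$-exponent to be $\le-\lambda<0$; no measure or closable orbit with nonnegative $E$-exponent exists to begin with. (The Pliss/ergodic-closing/sink-creation machinery is what the paper deploys for Theorem~\ref{Thm:partialtosingular}, where one must contradict \emph{sectional} expansion of $F$ and the flow-direction trick gives nothing.) Finally, your ``preliminary observation'' that $X(x)\in F(x)$ needs an actual argument: a priori $X(x)$ could lie in neither bundle, and ruling this out is exactly Lemma~\ref{Lem:flowdirection} (recurrence pushes $\Phi_{t_n}(X(y_0))$ into $F$ while it converges to $X(y_0)$), after which the index of $\sigma$, via the $\lambda$-lemma, decides which alternative holds.
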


\begin{Remark}
\cite{BoC04} proved that for $C^1$ generic vector field $X$, $X$ has two kinds of chain recurrent classes:
\begin{itemize}

\item Either, a chain recurrent class contains a hyperbolic periodic orbit, then the chain recurrent class is
the \emph{homoclinic class} of the hyperbolic periodic orbit, i.e., the closure of all transverse homoclinic
orbits of the hyperbolic periodic orbit.

\item Or, a chain recurrent class contains no periodic orbits, then it's called an \emph{aperiodic class}.

\end{itemize}

For diffeomorphisms, it's difficult to define the continuations of aperiodic classes. But for vector fields, if
an aperiodic class contains singularities, it's easy to define their continuations. One of the differences
between periodic orbits and singularities is: singularities can't have transverse homoclinic orbits.

\end{Remark}

We have some further remarks on the conjectures:
\begin{itemize}

\item \cite{ArR03} proved Conjecture~\ref{Con:palis-anydimension} for three-dimensional vector fields for $C^1$
topology.

\item \cite{GaY11} proved Conjecture~\ref{Con:weakPalis} for three-dimensional vector fields for $C^1$ topology.

\end{itemize}

In the spirit of conjectures of \cite{Bon11}, we have the following conjectures:
\begin{Conjecture}\label{Con:robustcycle}
Every vector field can be $C^1$ approximated by a vector field which is singular Axiom A without cycle, or by a
vector field with a robustly heterodimensional cycle.

\end{Conjecture}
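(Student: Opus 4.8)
Since Conjecture~\ref{Con:robustcycle} is an aspirational statement in arbitrary dimension, the realistic plan is to follow the strategy underlying Bonatti's program \cite{Bon11} and to use the results of this paper as the model for its singular-hyperbolic half. Fix a $C^1$-generic vector field $X$, so that the Conley decomposition of $\CR(X)$ into chain recurrent classes is available and, by \cite{BoC04}, every class with a periodic orbit is the homoclinic class of that orbit. The vector field $X$ is singular Axiom A without cycle precisely when there are finitely many classes, each is singular hyperbolic, and there is no cycle among them. Thus it suffices to prove: if $X$ is not singular Axiom A without cycle, then an arbitrarily small $C^1$ perturbation of $X$ has a robustly heterodimensional cycle. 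I would organize the failure into three cases --- a class that is not singular hyperbolic, infinitely many classes, or a cycle between classes --- and treat the first as the essential one, the other two being reducible to it by standard arguments (a filtration-breaking perturbation, or an accumulation argument producing weak periodic data).

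The core step is to convert a chain class $C$ that is \emph{not} singular hyperbolic into heterodimensional behavior. Here the theorems of this paper are the guide: Theorem~\ref{Thm:partialtosingular} shows that in dimension $3$ a dominated splitting together with a periodic orbit already forces singular hyperbolicity, and Theorem~\ref{Thm:dominatedtopartial} that a dominated splitting alone forces partial hyperbolicity. The contrapositive identifies the obstruction: a class failing singular hyperbolicity, yet containing a periodic orbit, must fail to admit a dominated splitting of the relevant index. In higher dimension one would need the analogous ``domination $\Rightarrow$ singular hyperbolicity'' theory, so the first genuine difficulty is to develop that half of the picture beyond the three-dimensional case treated here.

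Granting this, I would extract index variation from the absence of domination. Along the periodic orbits of $C$ the lack of a dominated splitting means arbitrarily weak contraction or expansion in complementary directions; by the flow version of Franks' lemma together with Gourmelon's eigenvalue-realization technique one perturbs, keeping orbits inside the class, to produce two periodic orbits $\gamma_1,\gamma_2$ of different stable indices that remain chain related. The $C^1$ connecting lemma then yields $W^s(\gamma_1)\cap W^u(\gamma_2)\neq\emptyset$ and $W^u(\gamma_1)\cap W^s(\gamma_2)\neq\emptyset$, i.e.\ a heterodimensional cycle. A point requiring care specific to flows is that such a cycle may pass through the singularity $\sigma$; since a singular cycle is never robust, one must first trade it for a cycle between hyperbolic periodic orbits, using the recurrence of regular points near $\sigma$ together with the connecting lemma.

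The main obstacle is the final, and genuinely open, step: upgrading the fragile heterodimensional cycle to a \emph{robust} one. For this I would construct a blender in the sense of Bonatti--D\'iaz attached to one of the orbits $\gamma_i$, arranging the non-domination that produced the index change into a $cs$- or $cu$-blender whose activation region robustly meets the relevant invariant manifold of the other orbit, so that both intersections persist in a $C^1$ neighborhood. Two serious difficulties arise here. First, the blender construction in the presence of a nearby singularity, with possibly one-dimensional central behavior, is delicate and not covered by the existing diffeomorphism technology. Second, and most importantly, the dichotomy as stated omits homoclinic tangencies; reconciling this requires showing that robust tangencies can themselves be unfolded into robust heterodimensional cycles in this singular context --- an assertion that is itself conjectural and is, in my view, the true heart of why Conjecture~\ref{Con:robustcycle} remains open.
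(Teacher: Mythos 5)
There is a fundamental mismatch here: the statement you were asked about is a \emph{Conjecture}, and the paper offers no proof of it --- it is stated (following \cite{Bon11}) precisely as an open problem, with the paper's Theorems~\ref{Thm:partialtosingular} and~\ref{Thm:dominatedtopartial} constituting only a small special case of its three-dimensional instance. Your text, to its credit, recognizes this and is written as a research program rather than a proof; but as a proof attempt it must be judged incomplete, since you yourself concede that the decisive steps are open: the ``domination implies singular hyperbolicity'' theory beyond dimension $3$ does not exist, the blender construction near a singularity with degenerate central behavior is not covered by existing technology, and the disposal of homoclinic tangencies (absent from the dichotomy as stated) is itself conjectural. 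Note also that an intermediate step you treat as standard is not: extracting two periodic orbits of different indices from mere failure of domination presupposes an abundance of periodic orbits in the class with controlled Lyapunov data, which in the singular setting runs into exactly the difficulty this paper is built around --- orbits spending long times near $\sigma$, where the linear Poincar\'e flow degenerates and Franks/Gourmelon-type perturbations along long orbit segments are delicate. (Even the weaker statement that a nontrivial chain class of a singularity contains a periodic orbit is recorded in the paper as the open Conjecture~\ref{Con:periodicinclass}.)

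There is moreover a concrete structural error in your core step for the case the paper actually concerns. In dimension $3$, hyperbolic periodic orbits of a flow have stable index $0$, $1$, or $2$ within the two-dimensional normal bundle, so the only saddles have index $1$: heterodimensional cycles between periodic orbits are \emph{impossible}. This is why the paper remarks that in dimension $3$ Conjecture~\ref{Con:robustcycle} is equivalent to the statement that a $C^1$-generic vector field is singular Axiom A without cycle (the ``singular Smale conjecture''). Hence your plan --- convert failure of singular hyperbolicity into an index variation and then into a robust heterodimensional cycle --- is vacuous precisely in the three-dimensional setting, where one must instead prove singular hyperbolicity directly; that is the route the paper takes for its partial results (via the ergodic closing lemma, Lemma~\ref{Lem:approximatedbysink}, and the Pujals--Sambarino-type Lemma~\ref{Lem:pujalssambarino}), and it is the only route available there. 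In higher dimension your outline is a reasonable statement of the expected program, but it is a program, not a proof, and the conjecture remains open.
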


\begin{Conjecture}\label{Con:tame}
For $C^1$ generic $X\in{\cal X}^1(M)$ which cannot be approximated by vector fields with a homoclinic tangency, $X$
has only finitely many chain recurrent classes.

\end{Conjecture}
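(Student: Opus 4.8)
The plan is to reduce the finiteness of chain recurrent classes to the structural results of this paper together with the generic theory of Bonatti--Crovisier. First I would invoke \cite{BoC04}: for $C^1$ generic $X$ every chain recurrent class is either the homoclinic class of a hyperbolic periodic orbit or an aperiodic class, and the homoclinic classes vary lower-semicontinuously with $X$. The strategy is then to show that, under the far-from-tangency hypothesis, each class is hyperbolic or singular hyperbolic, and that such classes cannot accumulate, so only finitely many survive.

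Next I would establish that being far from homoclinic tangencies forces a dominated splitting on every chain recurrent class. The mechanism is the one familiar from the diffeomorphism case: if a class admitted no dominated splitting of any index, a $C^1$ perturbation (via the connecting and closing lemmas, preserving the class) would produce a periodic orbit whose tangent behaviour along the splitting is arbitrarily weakly dominated, and a further perturbation of such a weak orbit creates a homoclinic tangency, contradicting the hypothesis. With a dominated splitting $T_{C}M=E\oplus F$ in hand I would split into two cases according to whether the class contains a singularity. For a class containing a singularity, Theorem~\ref{Thm:dominatedtopartial} already yields partial hyperbolicity, and Theorem~\ref{Thm:partialtosingular} upgrades this to singular hyperbolicity whenever the class contains a periodic point; the latter can either be assumed via Conjecture~\ref{Con:periodicinclass} or arranged by a connecting-lemma perturbation linking $\sigma$ to a periodic orbit inside the class. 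Thus the class is an attractor or a repeller, hence isolated, and since $X$ generic has finitely many (hyperbolic) singularities, each lying in at most one such class, there are only finitely many singular classes. For a class with no singularity I would run the three-dimensional analogue of the Pujals--Sambarino argument of \cite{MPP98,MPP04}: one bundle of the dominated splitting is one-dimensional, a Pliss/Mañé-type argument upgrades domination to uniform contraction or expansion on it, and the class is uniformly hyperbolic, hence locally maximal, robust, and again isolated.

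The main obstacle is the finiteness itself: isolation of each class does not by itself bound their number, and one must exclude an infinite family of (singular or uniformly) hyperbolic classes accumulating on some part of $\CR(X)$. Here I would use that being far from tangencies rules out the Newhouse phenomenon, so the attractors and repellers are finite in number, and then organize the remaining saddle-type hyperbolic classes through the Conley attractor--repeller filtration of $\CR(X)$: finiteness of the attractors at each level, together with lower semicontinuity of the filtration for generic $X$, would force the filtration to have finite depth and hence only finitely many classes. Controlling this accumulation of saddle classes --- equivalently, ruling out nontrivial aperiodic classes far from tangency and showing that hyperbolic classes whose uniform constants degenerate cannot persist --- is the deep point, and it is exactly where the argument must interface with the genericity hypotheses and Bonatti's conjectures rather than with soft Conley-theoretic topology. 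I expect that this finiteness step, and not the per-class hyperbolicity supplied by Theorems~\ref{Thm:partialtosingular} and~\ref{Thm:dominatedtopartial}, is where the genuine difficulty of the conjecture resides.
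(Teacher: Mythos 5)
The statement you are trying to prove is Conjecture~\ref{Con:tame}, which the paper explicitly leaves open: there is no proof in the paper to compare against, and the paper's Theorems~\ref{Thm:partialtosingular} and~\ref{Thm:dominatedtopartial} are offered only as evidence in its direction. Your proposal is a research program rather than a proof, and you in fact concede the decisive step yourself: per-class (singular) hyperbolicity and isolation do not bound the number of classes, and your suggested remedies --- ``no Newhouse phenomenon,'' a Conley filtration of finite depth, lower semicontinuity --- are heuristics with no mechanism behind them. Ruling out aperiodic classes and accumulating families of hyperbolic classes with degenerating constants is precisely the open content of the conjecture, so the gap you flag in your last paragraph is not a technical loose end but the whole problem.

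Beyond that structural gap, several intermediate steps are unjustified or circular. First, Conjecture~\ref{Con:tame} is stated for ${\cal X}^1(M)$ in arbitrary dimension, while Theorems~\ref{Thm:partialtosingular} and~\ref{Thm:dominatedtopartial}, Lemma~\ref{Lem:pujalssambarino}, and the whole Pujals--Sambarino machinery you invoke require $\dim M=3$; your argument cannot even start in higher dimension. Second, you invoke Conjecture~\ref{Con:periodicinclass}, itself open, to supply the periodic point needed for Theorem~\ref{Thm:partialtosingular}; the alternative you suggest (a connecting-lemma perturbation ``linking $\sigma$ to a periodic orbit inside the class'') produces a periodic orbit for a perturbed field $Y$, not for the generic $X$, and transferring it back is exactly what Conjecture~\ref{Con:periodicinclass} encapsulates. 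Third, the claim that far from tangencies forces a dominated splitting $T_CM=E\oplus F$ \emph{for the tangent flow} over a class containing a singularity is not the standard diffeomorphism mechanism: the Wen--Gourmelon-type arguments yield domination for the \emph{linear Poincar\'e flow} over periodic orbits, and near a singularity the normal bundle degenerates --- this is exactly the difficulty with singular flows that the paper's introduction emphasizes, and passing from Poincar\'e-flow domination away from ${\rm Sing}(X)$ to the tangent-flow hypothesis of Theorems~\ref{Thm:partialtosingular} and~\ref{Thm:dominatedtopartial} is a substantial unproved step, not a routine one. In short: the statement is an open conjecture, and your proposal, while a reasonable sketch of how the paper's results might be deployed toward it, does not close any of the three genuinely open gaps (domination from the far-from-tangency hypothesis, periodic points in singular classes, and global finiteness).
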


If $\dim M=3$, Conjecture~\ref{Con:robustcycle} can be restated as: $C^1$ generic vector field is singular Axiom
A without cycle. Compared with two-dimensional diffeomorphisms, this conjecture may be called \emph{singular
Smale conjecture}. \cite{CrY11} proves that for $C^1$ generic three-dimensional vector field $X$, the
singularities in a same chain recurrent class of $X$ have the same index.

There are also many results for higher dimensional vector fields. We give a partial list of them.
\begin{itemize}

\item \cite{BPV97} gave an example of singular hyperbolic attractors for which the unstable manifolds of
singularities have arbitrarily large dimension.

\item \cite{LGW05,ZGW08,MeM05} proved that under the \emph{star condition}, every robustly transitive set is
singular hyperbolic.

\item \cite{TuS98} constructed an example on robustly wild strange (quasi)-attractor with singularities for
four-dimensional vector fields for higher regularities.

\item \cite{BKR06} showed that there exist robustly chain transitive non-singular-hyperbolic attractors for five-dimensional
vector fields.

\item \cite{BLY08} showed that there exist robustly chain transitive non-singular-hyperbolic attractors with
different indices of singularities for four-dimensional vector fields.

\end{itemize}

\section{Preliminaries}

\subsection{Dominated splittings}

As in the introduction, every vector field $X\in{\cal X}^1(M)$ generates a flow $\phi_t^X$. We identity the
vector field and its flow as the same object. From the flow $\phi_t^X$, one can define its tangent flow
$\Phi_t^X=d\phi_t^X:~TM\to TM$. For every regular point $x\in M\setminus{\rm Sing}(X)$, one can define its
normal space
$${\cal N}_x=\{v\in T_x M:~<v,X(x)>=0\}.$$
Define the normal bundle on regular points as:
$${\cal N}=\bigsqcup_{x\in M\setminus{\rm Sing}(X)}{\cal N}_x.$$

On the normal bundle ${\cal N}$, one can define the linear Poincar\'e flow $\psi_t^X$: for each $v\in {\cal
N}_x$, one can define
$$\psi_t(v)=\Phi_t(v)-\frac{<\Phi_t(v),X(\phi_t(x))>}{|X(\phi_t(x))|^2}X(\phi_t(x)).$$
For an invariant (may not compact) set $\Lambda\subset M\setminus{\rm Sing}(X)$, one says that $\Lambda$ admits
a dominated splitting with respect to the linear Poincar\'e flow if there are constants $C>0$, $\lambda<0$ and
an invariant splitting ${\cal N}_\Lambda=\Delta^s\oplus\Delta^u$ such that for any $x\in\Lambda$, one has
$\|\psi_t|_{\Delta^s(x)}\|\|\psi_{-t}|_{\Delta^u(\phi_t(x))}\|<Ce^{\lambda t}$. $\dim \Delta^s$ is called the
\emph{index} of the dominated splitting.

If $\Lambda$ is a compact invariant set without singularities, the existence of dominated splitting for the
linear Poincar\'e flow is a robust property.

\begin{Lemma}\label{Lem:robustdominated}
For $X\in{\cal X}^1(M)$, if $\Lambda$ is a compact invariant set which is disjoint from singularities, and
admits a dominated splitting with respect to the linear Poincar\'e flow of index $i$, then there is
$\varepsilon>0$ such that for each $Y$ which is $\varepsilon$-$C^1$-close to $X$, for any compact invariant set
$\Lambda_Y$ contained in the $\varepsilon$ neighborhood of $\Lambda$,  $\Lambda_Y$ admits a dominated
splitting with respect to the linear Poincar\'e flow of index $i$.

\end{Lemma}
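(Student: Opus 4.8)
The plan is to reduce the statement to the standard fact that a strictly invariant cone field is a $C^1$-open witness of domination, the only genuinely new ingredient being the control of the linear Poincar\'e flow under perturbation away from singularities. First I would fix a compact neighborhood $U$ of $\Lambda$ and a constant $\delta_0>0$ with $|X(x)|\ge\delta_0$ for all $x\in U$; this is possible since $\Lambda$ is compact and disjoint from the compact set $\mathrm{Sing}(X)$. Shrinking $U$, I may assume in addition that every orbit segment $\{\phi_s^X(x):0\le s\le T\}$ through $U$ stays in the region $\{|X|\ge\delta_0\}$, where $T$ is a fixed time chosen below. For $Y$ that is $\varepsilon$-$C^1$-close to $X$ with $\varepsilon$ small, one then has $|Y|\ge\delta_0/2$ on this region, so $Y$ has no singularity there and $\psi_t^Y$ is well defined. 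Next I would replace the asymptotic domination hypothesis by a uniform finite-time one: picking $T$ with $Ce^{\lambda T}<1/2$, the splitting $\mathcal{N}_\Lambda=\Delta^s\oplus\Delta^u$ satisfies $\|\psi_T^X|_{\Delta^s(x)}\|\,\|\psi_{-T}^X|_{\Delta^u(\phi_T(x))}\|<1/2$ for every $x\in\Lambda$.

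The core of the argument is the cone criterion. I would extend $\Delta^s$ and $\Delta^u$ continuously (not invariantly) over $U$ and build two transverse continuous cone fields on $U$: a cone field $\mathcal{C}^u$ of dimension $\dim\mathcal{N}-i$ around the extended $\Delta^u$ and a complementary one $\mathcal{C}^s$ of dimension $i$ around $\Delta^s$. The finite-time domination on $\Lambda$ translates into strict invariance $\psi_T^X(\mathcal{C}^u(x))\subset\operatorname{int}\mathcal{C}^u(\phi_T^X(x))$ and $\psi_{-T}^X(\mathcal{C}^s(x))\subset\operatorname{int}\mathcal{C}^s(\phi_{-T}^X(x))$; by compactness of $\Lambda$ and continuity these strict inclusions persist on a full neighborhood. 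For $Y$ close to $X$ I would then argue that $\psi_T^Y$ is $C^0$-close to $\psi_T^X$, uniformly on the relevant part of $U$: the maps $\phi_T^Y,\Phi_T^Y$ depend continuously on the vector field over the bounded interval $[0,T]$, and the orthogonal projection defining the linear Poincar\'e flow depends continuously on the normal bundle $\mathcal{N}^Y$, which is $C^0$-close to $\mathcal{N}^X$ precisely because the lower bound $|Y|\ge\delta_0/2$ keeps the denominators $|Y(\phi_s^Y(x))|^2$ away from zero. Transporting the cone fields into $\mathcal{N}^Y$ by the near-isometric projection $\mathcal{N}^X_x\to\mathcal{N}^Y_x$, the strict invariance is preserved, so for every $Y$-invariant $\Lambda_Y\subset U$ and every $x\in\Lambda_Y$ one gets $\psi_T^Y(\mathcal{C}^u(x))\subset\operatorname{int}\mathcal{C}^u(\phi_T^Y(x))$ and the symmetric inclusion for $\mathcal{C}^s$.

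Finally I would recover the dominated splitting for $Y$ from the invariant cone fields in the usual way: setting $\Delta^u_Y(x)=\bigcap_{n\ge0}\psi_{nT}^Y\big(\mathcal{C}^u(\phi_{-nT}^Y(x))\big)$ and $\Delta^s_Y(x)=\bigcap_{n\ge0}\psi_{-nT}^Y\big(\mathcal{C}^s(\phi_{nT}^Y(x))\big)$, the strict invariance forces these intersections to be continuous invariant subbundles with $\dim\Delta^s_Y=i$ and $\mathcal{N}^Y_{\Lambda_Y}=\Delta^s_Y\oplus\Delta^u_Y$, while the uniform depth of the cone inclusion yields the domination inequality at time $T$, hence at all times, with constants independent of $\Lambda_Y$. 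I expect the main obstacle to be exactly the middle step: establishing the uniform $C^0$-closeness of $\psi_T^Y$ to $\psi_T^X$ and justifying the transport of the cone fields between the two normal bundles, since, unlike the tangent flow, the linear Poincar\'e flow depends on $Y$ both through the flow and through the $Y$-dependent normal directions. Everything else is the classical argument that strict cone invariance over a compact set is a robust, and therefore $C^1$-open, property.
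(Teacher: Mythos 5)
The paper states Lemma~\ref{Lem:robustdominated} without proof, treating it as a standard robustness fact, so there is no argument of the authors' to compare against; your cone-field proof is the classical one and is correct. You also correctly isolate the only point specific to the flow setting --- that $\psi_t^Y$ lives on the $Y$-dependent normal bundle, so one must use the lower bound $|Y|\ge\delta_0/2$ near $\Lambda$ to make the identification $\mathcal{N}^X_x\to\mathcal{N}^Y_x$ near-isometric and transport the strictly invariant cones --- which is exactly where the hypothesis that $\Lambda$ avoids $\mathrm{Sing}(X)$ is used.
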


For dominated splittings of tangent flows, one will always have the robust property for compact invariant sets
with singularity or not.
\begin{Lemma}\label{Lem:tangentdominatedrobust}
For $X\in{\cal X}^1(M)$, if $\Lambda$ is a compact invariant set with a dominated splitting with respect to the
tangent flow of index $i$, then there is $\varepsilon>0$ such that for each $Y$ which is
$\varepsilon$-$C^1$-close to $X$, for any compact invariant set $\Lambda_Y$ contained in the $\varepsilon$
neighborhood of $\Lambda$, $\Lambda_Y$ admits a dominated splitting with respect to the tangent flow of
index $i$.
\end{Lemma}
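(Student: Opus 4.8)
The plan is to reduce the all-times domination to a single-time estimate and then run the standard invariant-cone-field argument, exploiting that the tangent flow $\Phi_T^Y$ depends continuously on $Y$ in the $C^1$ topology. First I would record the equivalent ``time-$T$'' formulation of domination: from the splitting $T_\Lambda M = E\oplus F$ there is $T>0$ with
$$\|\Phi_T^X|_{E(x)}\|\,\|\Phi_{-T}^X|_{F(\phi_T^X(x))}\|\le \tfrac12 \quad\text{for all }x\in\Lambda,$$
and, conversely, such a uniform time-$T$ estimate on an invariant compact set yields the exponential estimate $Ce^{-\lambda t}$ for all $t$ (splitting $t$ into blocks of length $T$ and bounding the single leftover piece using that $\Phi_s^Y$ is uniformly bounded for $s\in[0,T]$). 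Thus it suffices to produce, for $Y$ close to $X$ and $\Lambda_Y$ near $\Lambda$, an invariant continuous splitting of index $i$ satisfying the time-$T$ estimate.

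Next I would set up cones on a neighborhood. Since $\Lambda$ is compact I can extend $E$ and $F$ to continuous (non-invariant) bundles $\hat E,\hat F$ on a neighborhood $U$ of $\Lambda$ with $\hat E(x)\oplus\hat F(x) = T_x M$ and $\dim\hat E=i$. For $a>0$ set
$$C_a^F(x)=\{u+v:\ u\in\hat E(x),\ v\in\hat F(x),\ \|u\|\le a\|v\|\},$$
and define the complementary $E$-cone $C_a^E(x)$ symmetrically. The time-$T$ estimate on $\Lambda$ gives, for small $a$ and some $\mu<1$, the strict cone invariances $\Phi_T^X(C_a^F(x))\subset C_{\mu a}^F(\phi_T^X(x))$ and $\Phi_{-T}^X(C_a^E(x))\subset C_{\mu a}^E(\phi_{-T}^X(x))$ for $x\in\Lambda$.

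Then comes the perturbation step, which is where the tangent flow behaves better than the linear Poincar\'e flow: for fixed $T$ the map $\Phi_T^Y=d\phi_T^Y$ converges to $\Phi_T^X$ uniformly on compact subsets of $TM$ as $Y\to X$ in ${\cal X}^1(M)$, and it is defined everywhere, including at singularities. Strict cone invariance is an open condition, so by continuity in both the base point and the vector field there are $\varepsilon>0$ and a smaller neighborhood $U'\subset U$ of $\Lambda$ such that, for every $Y$ that is $\varepsilon$-$C^1$-close to $X$ and every $x\in U'$ with $\phi_{\pm T}^Y(x)\in U'$, the same strict cone invariances hold for $\Phi_{\pm T}^Y$. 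Shrinking $\varepsilon$ guarantees that any invariant $\Lambda_Y$ in the $\varepsilon$-neighborhood of $\Lambda$ lies in $U'$ and is $\phi_{\pm T}^Y$-invariant, so the cone invariances hold along the full orbit of each point of $\Lambda_Y$.

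Finally I would extract the splitting on $\Lambda_Y$ by the usual intersection procedure: $\tilde F(x)=\bigcap_{n\ge 0}\Phi_{nT}^Y(C_a^F(\phi_{-nT}^Y(x)))$ is a continuous $\Phi_T^Y$-invariant subbundle of dimension $\dim F$, and symmetrically $\tilde E(x)=\bigcap_{n\ge 0}\Phi_{-nT}^Y(C_a^E(\phi_{nT}^Y(x)))$ is continuous, invariant, and of dimension $i$; strict contraction of the cones forces $\tilde E(x)\oplus\tilde F(x)=T_xM$ together with the time-$T$ domination estimate, whence the exponential estimate above. The main thing to be careful about is uniformity: all constants ($T$, $a$, $\mu$) must be fixed on $\Lambda$ first and then shown to survive on $U'$ for all $C^1$-close $Y$ simultaneously; the only genuine input making this routine rather than delicate is the uniform $C^1$-continuity of $Y\mapsto\Phi_T^Y$, together with compactness of $\Lambda$ and of $[0,T]$. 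Since the tangent flow never degenerates, no singularity issue arises, which is precisely why, unlike Lemma~\ref{Lem:robustdominated}, no disjointness from ${\rm Sing}(X)$ is needed.
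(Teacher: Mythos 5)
The paper states this lemma without proof, treating it as a standard fact about dominated splittings for the tangent flow (in contrast to Lemma~\ref{Lem:robustdominated}, whose content is precisely that the same robustness for the linear Poincar\'e flow requires staying away from singularities); your cone-field argument is the standard proof of it and is correct, including the key observation that $\Phi_T^Y=d\phi_T^Y$ is globally defined and depends continuously on $Y$ in the $C^1$ topology, so no degeneration occurs at singularities. The only point you gloss over is that the intersection construction directly yields a splitting invariant under $\Phi_{nT}^Y$ only; to get invariance under $\Phi_t^Y$ for all $t$ one uses that $\Phi_s^Y(\tilde E\oplus\tilde F)$ is again a time-$T$ dominated splitting of index $i$ over the flow-invariant set $\Lambda_Y$ (the constants change only by the uniform bound on $\|\Phi_s^Y\|$, $s\in[0,T]$) and invokes uniqueness of dominated splittings of a given index --- a routine step that does not affect the validity of the argument.
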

By the definition of linear Poincar\'e flow, one has the following lemma:
\begin{Lemma}\label{Lem:tangenttonormal}

For $X\in{\cal X}^1(M)$, if $\Lambda$ is a compact invariant set with a dominated splitting $T_\Lambda M=E\oplus
F$ with respect to the tangent flow and $X(x)\in F(x)$ for any $x\in\Lambda$, then ${\cal
N}_{\Lambda\setminus{\rm Sing}(X)}$ admits a dominated splitting of index $\dim E$ with respect to the linear
Poincar\'e flow.

\end{Lemma}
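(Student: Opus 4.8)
The plan is to establish the claimed dominated splitting on the normal bundle by directly projecting the tangent-flow domination through the linear Poincaré flow, using the hypothesis that the flow direction lies inside $F$. The key observation is that the linear Poincaré flow $\psi_t$ differs from the tangent flow $\Phi_t$ only by the normalization that removes the component along $X(\phi_t(x))$, and since $X(x) \in F(x)$ throughout $\Lambda$, the subbundle $E$ is already transverse to the flow line. So I expect the natural candidate splitting of ${\cal N}_{\Lambda\setminus{\rm Sing}(X)}$ to be $\Delta^s = E$ (which lies in the normal space automatically, being orthogonal to $X$ up to the projection) and $\Delta^u = $ the image of $F$ under the orthogonal projection onto ${\cal N}$.

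\medskip

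First I would define precisely the candidate bundles. Let $\pi_x : T_x M \to {\cal N}_x$ be the orthogonal projection onto the normal space at a regular point $x \in \Lambda \setminus {\rm Sing}(X)$. Because $X(x) \in F(x)$ and $E(x) \oplus F(x) = T_x M$ is a direct sum, one checks that $E(x)$ maps injectively into ${\cal N}_x$; set $\Delta^s(x) = \pi_x(E(x))$ and $\Delta^u(x) = \pi_x(F(x)) = {\cal N}_x \cap$ (the complement of $\langle X(x)\rangle$ inside $F(x)$, projected). Since $X(x) \in F(x)$, we have $\pi_x(F(x)) = {\cal N}_x \cap \pi_x(F(x))$ has dimension $\dim F - 1$, so $\dim \Delta^s = \dim E$ as required. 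The next step is to verify that this splitting is $\psi_t$-invariant: invariance of $E \oplus F$ under $\Phi_t$ together with the commutation $\psi_t \circ \pi = \pi \circ \Phi_t|_{\cal N}$ (the defining property of the linear Poincaré flow) gives $\psi_t(\Delta^s(x)) = \Delta^s(\phi_t(x))$ and similarly for $\Delta^u$.

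\medskip

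The heart of the argument is the domination estimate. I would compare the norms $\|\psi_t|_{\Delta^s(x)}\|$ and $\|\psi_{-t}|_{\Delta^u(\phi_t(x))}\|$ against the corresponding tangent-flow norms $\|\Phi_t|_{E(x)}\|$ and $\|\Phi_{-t}|_{F(\phi_t(x))}\|$. On $E$, the projection $\pi$ is uniformly bounded above and below (bounded below because $E$ is uniformly transverse to $\langle X\rangle$, which follows from continuity of the splitting and compactness away from singularities, or from the uniform angle estimate that domination provides), so $\|\psi_t|_{\Delta^s}\|$ is comparable to $\|\Phi_t|_E\|$ up to a multiplicative constant. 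On $\Delta^u$, the situation requires more care because $\psi_t$ quotients out the growing flow direction inside $F$; but since we take $\psi_{-t}$ of the \emph{contracting-for-backward-time} part and the flow direction is the most expanded direction of $F$, removing it can only decrease $\|\psi_{-t}|_{\Delta^u}\|$, so we still get a bound of the form $\|\psi_{-t}|_{\Delta^u(\phi_t(x))}\| \le C'\|\Phi_{-t}|_{F(\phi_t(x))}\|$. Multiplying the two estimates and invoking the tangent-flow domination $\|\Phi_t|_{E(x)}\|\,\|\Phi_{-t}|_{F(\phi_t(x))}\| \le C e^{-\lambda t}$ yields the desired $\|\psi_t|_{\Delta^s(x)}\|\,\|\psi_{-t}|_{\Delta^u(\phi_t(x))}\| < C'' e^{\lambda' t}$ for suitable constants.

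\medskip

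\textbf{The main obstacle} I anticipate is controlling the projection $\pi$ uniformly on the \emph{non-compact} set $\Lambda \setminus {\rm Sing}(X)$: as a regular orbit approaches a singularity $\sigma \in \Lambda$, the norm $|X(x)|$ tends to zero and the normal space ${\cal N}_x$ can twist badly, potentially making the projection constants blow up. The resolution is that these constants depend only on the \emph{angle} between $E(x)$ and $\langle X(x)\rangle$ and between $F(x)$ and its flow direction, not on $|X(x)|$ itself; the uniform transversality of $E$ to the flow direction is guaranteed on all of $\Lambda$ by the continuity of the dominated splitting $E \oplus F$ (which is defined on the compact set $C(\sigma)$ including the singularity) combined with the hypothesis $X(x) \in F(x)$, which forces the angle between $E$ and $X$ to stay bounded away from zero. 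Thus the comparison constants are genuinely uniform, and the estimate survives the passage near singularities — this is exactly the point where the lemma's hypothesis $X(x) \in F(x)$ does the essential work.
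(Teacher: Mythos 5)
Your proof is correct and is exactly the standard argument the paper has in mind: the paper states this lemma without proof, asserting only that it follows ``by the definition of the linear Poincar\'e flow'', and your identification of $\Delta^s=\pi(E)$, $\Delta^u=\pi(F)=F\cap{\cal N}$, the commutation $\pi\circ\Phi_t=\psi_t\circ\pi$, and the uniform angle bound between $E$ and $\langle X\rangle$ (coming from $X\in F$ and the uniform transversality of the dominated splitting on the compact set $\Lambda$, independent of $|X(x)|$) is precisely the intended content. One cosmetic point: your estimate $\|\psi_{-t}|_{\Delta^u(\phi_t(x))}\|\le C'\,\|\Phi_{-t}|_{F(\phi_t(x))}\|$ holds with $C'=1$ simply because $\psi_{-t}=\pi\circ\Phi_{-t}$ and the orthogonal projection onto ${\cal N}_x$ is norm-nonincreasing; the stated justification that the flow direction is ``the most expanded direction of $F$'' is neither needed nor true in general.
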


\subsection{Minimally non-hyperbolic set and $C^2$ arguments}
Sometimes we need to discuss non-hyperbolic set. Its non-hyperbolicity will concentrate on some smaller parts,
which are called \emph{minimally non-hyperbolic set} from Liao \cite{Lia81} and Ma\~n\'e \cite{Man85}.  A
compact invariant set $\Lambda$ is called \emph{minimally non-hyperbolic} if $\Lambda$ is not hyperbolic and
every compact invariant proper subset of $\Lambda$ is hyperbolic. From \cite{ArR03,PuS00}, one has the following
two lemmas.
\begin{Lemma}\label{Lem:minimalnonhyperbolictransitive}
Assume that $\dim M=3$, if $\Lambda$ is a minimally non-hyperbolic set of a vector field $X\in{\cal X}^1(M)$
such that
\begin{itemize}

\item $\Lambda\cap{\rm Sing}(X)=\emptyset$,

\item ${\cal N}_\Lambda$ admits a dominated splitting with respect to the linear Poincar\'e flow,

\end{itemize}
then $\Lambda$ is transitive.

\end{Lemma}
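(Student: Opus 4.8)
The plan is to pass to the normal bundle and argue by contradiction, using minimality to force the non-hyperbolic behaviour to be carried by a measure of full support. Since $\dim M=3$ and $\Lambda\cap{\rm Sing}(X)=\emptyset$, the bundle ${\cal N}_\Lambda$ is two-dimensional, so its dominated splitting for the linear Poincar\'e flow has index $1$ and is a splitting into two continuous invariant line bundles ${\cal N}_\Lambda=\Delta^s\oplus\Delta^u$. The first point I would record is the classical criterion that, for a compact invariant set without singularities, $\Lambda$ is hyperbolic if and only if $\Delta^s$ is uniformly contracted and $\Delta^u$ uniformly expanded by $\psi_t$ (the flow direction $\langle X\rangle$ supplies the neutral bundle of the hyperbolic splitting of $T_\Lambda M$). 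Since $\Lambda$ is not hyperbolic, one of these two uniform estimates fails; replacing $X$ by $-X$ if necessary, I may assume that $\Delta^s$ is not uniformly contracted.

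Next I would encode this failure by an invariant measure. As $\Delta^s$ is one-dimensional, $t\mapsto\log\|\psi_t|_{\Delta^s(x)}\|$ is an additive continuous cocycle, so the top growth rate $\lim_{t\to+\infty}\frac1t\max_{x\in\Lambda}\log\|\psi_t|_{\Delta^s(x)}\|$ is realised as $\lambda^s(\mu)=\int\log\|\psi_1|_{\Delta^s}\|\,d\mu$ by some ergodic invariant measure $\mu$, and the failure of uniform contraction means this quantity is $\ge 0$. Assume now, for contradiction, that $\Lambda$ is not transitive. Then no ergodic measure has full support, because a $\mu$-generic point of a fully supported ergodic measure has a dense forward orbit; hence $K:=\operatorname{supp}(\mu)$ is a proper compact invariant subset of $\Lambda$, and by minimal non-hyperbolicity $K$ is hyperbolic. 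Matching the restricted index-$1$ splitting $\Delta^s\oplus\Delta^u$ with the hyperbolic splitting of $K$ (by uniqueness of dominated splittings of a given index), the subbundle $\Delta^s$ is the more contracted line; hence $\lambda^s(\mu)\ge 0$ is impossible on a saddle or a sink and forces $K$ to be a repelling periodic orbit, the only hyperbolic type compatible with a nonnegative $\Delta^s$-exponent.

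The main obstacle is exactly this last case: excluding that the non-contraction is concentrated on a proper source. Here I would use that $\Lambda$ carries no proper repeller; in the situations where this lemma is applied $\Lambda$ is chain transitive, so by Conley theory it admits no proper attractor or repeller, which rules out the proper source $K$. Once sinks and sources are excluded, every proper compact invariant subset is a saddle-type hyperbolic set, on which $\lambda^s<0$; thus $K=\operatorname{supp}(\mu)$ cannot be proper, contradicting the consequence of non-transitivity. Therefore $\Lambda$ is transitive. I expect the genuinely delicate steps to be the realisation of the top exponent by an ergodic measure and, above all, the exclusion of proper attractors and repellers: the structural ($C^2$, Pujals--Sambarino type) theory of one-dimensional dominated bundles is the natural tool both to pin down the extremal exponent and to locate the recurrence that must fill all of $\Lambda$.
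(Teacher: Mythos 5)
The paper itself offers no proof of this lemma: it is imported verbatim from \cite{ArR03,PuS00} (``From \cite{ArR03,PuS00}, one has the following two lemmas''), so there is no in-paper argument to compare yours against and I can only judge the proposal on its own terms. Your overall route is the standard one and is sound up to the point you yourself flag: Doering's criterion reduces hyperbolicity of a nonsingular compact invariant set to hyperbolicity of the linear Poincar\'e flow; the failure of uniform contraction of the one-dimensional bundle $\Delta^s$ (after possibly reversing time) is an additive-cocycle statement and is indeed realised by an ergodic measure $\mu$ with $\int\log\|\psi_1|_{\Delta^s}\|\,d\mu\ge 0$; if $\Lambda$ is not transitive then $K={\rm supp}(\mu)$ is proper, hence hyperbolic by minimality, and uniqueness of index-$1$ dominated splittings kills the saddle and sink cases. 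All of that is correct.

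The genuine gap is the repeller case, and your resolution of it is not licensed by the statement: you invoke chain transitivity of $\Lambda$, which is not a hypothesis of the lemma but a feature of the context where it is applied --- and even there the lemma is applied to a minimally non-hyperbolic \emph{subset} $\Gamma$ of a chain transitive set, and $\Gamma$ itself is not known to be chain transitive (the appeal can be repaired: a proper subset of $\Gamma$ on which $\psi_t$ is uniformly expanding is a repeller of the ambient flow, hence a proper repeller of the flow restricted to the enclosing chain class, which is impossible; but this must be said explicitly). That the repeller case cannot be dismissed from the stated hypotheses alone is shown by the configuration $\Lambda=\gamma_1\cup{\rm Orb}(z)\cup\gamma_2$, where $\gamma_1$ is a repelling periodic orbit with multipliers $1<\mu_1<\mu_2$, $\gamma_2$ is a saddle, and $\alpha(z)=\gamma_1$, $\omega(z)=\gamma_2$: this set is nonsingular, is not hyperbolic (the stable index of any candidate hyperbolic splitting would have to jump from $0$ to $1$ along a connected set), has all proper compact invariant subsets inside the hyperbolic set $\gamma_1\cup\gamma_2$, and for a generic such configuration carries a dominated splitting of the normal bundle (take $\Delta^u$ to be the strong-unstable direction of $\gamma_1$ transported along the orbit and $\Delta^s$ the trace of $TW^s(\gamma_2)$ on the normal bundle); yet it is not topologically transitive. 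So either ``transitive'' is to be read as ``has a dense two-sided orbit'', or the source case must be excluded by data external to the statement. The self-contained way the literature closes this case is to produce $\mu$ not abstractly but on the $\omega$-limit set of a Pliss--Liao selected point $x$ along whose forward orbit $\|\psi_t|_{\Delta^s}\|$ stays bounded below: if $\omega(x)$ were a proper repeller, $x$ would have to lie inside it (a forward orbit can accumulate on a repeller only from within), and the argument then branches into the dichotomy ``transitive, or two hyperbolic pieces joined by a connecting orbit''. Your write-up, as it stands, does not close that branch.
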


$X$ is called \emph{weak-Kupka-Smale} if every periodic orbit or singularity is hyperbolic.

\begin{Lemma}\label{Lem:pujalssambarino}
Assume that $\dim M=3$ and $X$ is a $C^2$ weak-Kupka-Smale , if $\Lambda$ is a \emph{transitive} minimally
non-hyperbolic set of a vector field $X$ such that $\Lambda\cap{\rm Sing}(X)=\emptyset$, then $\Lambda$ is a
normally hyperbolic torus and the dynamics on $\Lambda$ is equivalent to an irrational flow.

\end{Lemma}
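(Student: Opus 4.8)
The plan is to reduce the statement to the Pujals--Sambarino theorem for $C^2$ surface diffeomorphisms (\cite{PuS00}) by passing from the flow to its Poincar\'e return maps on local cross-sections. Since $\Lambda$ is compact and disjoint from $\mathrm{Sing}(X)$, the vector field $X$ is bounded away from zero on $\Lambda$; hence one may cover $\Lambda$ by finitely many two-dimensional disks $\Sigma_1,\dots,\Sigma_k$ transverse to $X$, with uniformly bounded return times. The local Poincar\'e maps between these cross-sections are $C^2$ (because $X$ is $C^2$), and their derivatives are, after the canonical identification of the tangent spaces to the $\Sigma_i$ with the normal spaces $\mathcal{N}_x$, exactly the linear Poincar\'e flow $\psi_t$. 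Consequently the dominated splitting $\mathcal{N}_\Lambda=\Delta^s\oplus\Delta^u$ induces a dominated splitting for these return maps on the trace of $\Lambda$ in each $\Sigma_i$.

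First I would apply the Pujals--Sambarino dichotomy to this $C^2$ return dynamics on the surfaces $\Sigma_i$. It yields that $\Lambda\cap\Sigma_i$ decomposes into a hyperbolic part together with finitely many normally hyperbolic closed curves, on each of which the return map is conjugate to a rotation. Because $X$ is weak-Kupka-Smale, none of these curves can carry a rational rotation: a periodic point of the return map on such a curve would be a non-hyperbolic periodic orbit of the flow (it is neutral along the curve and only dominated transversally), contradicting the hyperbolicity of all periodic orbits. Hence each invariant curve supports an irrational rotation and contains no periodic point.

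Next I would use the hypothesis that $\Lambda$ is transitive and minimally non-hyperbolic to collapse this decomposition. Saturating one such invariant curve $\mathcal{C}\subset\Sigma_i$ by the flow, $T=\bigcup_{t}\phi_t(\mathcal{C})$ is an embedded torus invariant under $\phi_t$; along $T$ the flow direction and the curve direction are neutral, while the remaining one-dimensional normal direction coming from $\Delta^s\oplus\Delta^u$ is uniformly hyperbolic, so $T$ is normally hyperbolic. A normally hyperbolic torus carrying an irrational flow is not hyperbolic, since it has a neutral direction tangent to the curve in addition to $X$. If $T$ were a proper subset of $\Lambda$, then by minimal non-hyperbolicity $T$ would have to be hyperbolic, a contradiction; likewise any hyperbolic piece in the decomposition would be a proper compact invariant subset, again impossible once $\Lambda$ is forced to be a single non-hyperbolic object. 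Therefore $\Lambda=T$, a single normally hyperbolic torus on which the flow is topologically an irrational flow.

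The main obstacle I expect is the transfer of the $C^2$ Pujals--Sambarino machinery from a single surface diffeomorphism to the system of local return maps with non-constant return times: one must verify that the bounded-distortion and $C^2$ estimates underlying \cite{PuS00} survive the composition of local Poincar\'e maps, and that the invariant curves produced in the various cross-sections $\Sigma_i$ match up coherently under the flow to form one globally embedded torus rather than several pieces. Controlling this gluing, together with checking that the normal hyperbolicity obtained for the return maps is genuinely uniform along the possibly long orbit segments between cross-sections, is the technical heart of the argument.
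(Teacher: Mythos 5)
The paper does not prove this lemma at all: it is quoted as a known result, ``From \cite{ArR03,PuS00}, one has the following two lemmas,'' and the actual content is Theorem~A of Arroyo--Rodriguez~Hertz \cite{ArR03}, which is precisely the adaptation of the Pujals--Sambarino $C^2$ dichotomy \cite{PuS00} to nonsingular three-dimensional flows. Your outline reproduces the strategy of \cite{ArR03} (cross-sections, identification of the derivative of the return maps with the linear Poincar\'e flow, exclusion of rational rotations by the weak-Kupka-Smale hypothesis, and the saturation of the invariant circles into normally hyperbolic tori), so in that sense it is the ``same approach''; but be aware of two points. First, the statement as used here tacitly carries the hypothesis that $\mathcal{N}_\Lambda$ has a dominated splitting for $\psi_t$ (it is inherited from Lemma~\ref{Lem:minimalnonhyperbolictransitive} and from the context in which the lemma is applied); without it the conclusion is false, and your argument correctly uses it, so you should state it. Second, the step you flag as ``the main obstacle'' is not a routine verification but essentially the entire content of \cite{ArR03}: the Pujals--Sambarino theorem cannot be invoked as a black box for a system of locally defined return maps between finitely many cross-sections, and the $C^2$ distortion and denseness-of-hyperbolicity arguments must be redone in that setting (including the control of the flow direction, which is neutral and must be factored out). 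So your proposal is a correct high-level reduction, with the technical core honestly identified but deferred to the reference, which is exactly what the paper itself does.
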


\subsection{Chain recurrence}

A compact invariant set $\Lambda$ is called \emph{chain transitive} if for any $\varepsilon>0$, for any
$x,y\in\Lambda$, there are $\{x_i\}_{i=0}^n\subset\Lambda$ and $\{t_i\}_{i=0}^{n-1}\subset[1,\infty)$ such that
$x_0=x$, $x_n=y$ and $d(\phi_{t_i}(x_i),x_{i+1})<\varepsilon$ for each $0\le i\le n-1$. For chain transitive
sets and hyperbolic periodic orbits or singularities, by using $\lambda$-lemma, one has
\begin{Lemma}\label{Lem:nontrivial}
If $\Lambda$ is a non-trivial chain transitive set and $\Lambda$ contains $\gamma$, where $\gamma$ is a
hyperbolic periodic orbit or a hyperbolic singularity, then $\Lambda\cap
W^s(\gamma)\setminus\{\gamma\}\neq\emptyset$ and $\Lambda\cap W^u(\gamma)\setminus\{\gamma\}\neq\emptyset$.

\end{Lemma}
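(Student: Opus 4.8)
The plan is to prove only $\Lambda\cap W^u(\gamma)\setminus\gamma\neq\emptyset$; the stable assertion then follows by applying the same argument to the reversed field $-X$, since reversing every chain shows that $\Lambda$ is chain transitive for $-X$ as well, and $W^u_{-X}(\gamma)=W^s_X(\gamma)$. To set up, I would first exploit non-triviality of $\Lambda$: as $\Lambda$ is not reduced to the orbit $\gamma$, there is a point $z\in\Lambda$ with $d(z,\gamma)=:2\delta_0>0$, and after shrinking $\delta_0$ I may assume that on the ball (or tubular neighborhood) $U=B(\gamma,2\delta_0)$ the critical element $\gamma$ admits local invariant manifolds $W^s_{\mathrm{loc}}(\gamma)$ and $W^u_{\mathrm{loc}}(\gamma)$ meeting only along $\gamma$, with the usual hyperbolic estimates available. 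Note that every point lying on a flow arc issued from a chain point belongs to $\Lambda$, because the chain is contained in $\Lambda$ and $\Lambda$ is compact invariant.

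The key step --- and the place where the $\lambda$-lemma enters --- is a pseudo-orbit escape estimate: there exist $\varepsilon_0>0$ and a modulus $\rho(\varepsilon)\to0$ as $\varepsilon\to0$ such that any $\varepsilon$-chain (with $\varepsilon<\varepsilon_0$) starting at a point of $\gamma$, whose successive orbit arcs remain in $B(\gamma,\delta_0)$ until it first reaches the sphere $\partial B(\gamma,\delta_0)$ at a point $w$, satisfies $d(w,W^u_{\mathrm{loc}}(\gamma))<\rho(\varepsilon)$. The reason is that, starting exactly on $\gamma=W^s_{\mathrm{loc}}(\gamma)\cap W^u_{\mathrm{loc}}(\gamma)$, the component of the pseudo-trajectory along the contracting direction transverse to $W^u_{\mathrm{loc}}(\gamma)$ is fed only by the jumps of size $<\varepsilon$ and is exponentially contracted by the flow between jumps, so it stays of order $\rho(\varepsilon)$; to reach distance $\delta_0$ the trajectory must therefore have pushed its unstable component out to order $\delta_0$, forcing the exit point $w$ to lie within $\rho(\varepsilon)$ of $W^u_{\mathrm{loc}}(\gamma)$. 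Invoking chain transitivity, for each $n$ I would take a $\tfrac1n$-chain inside $\Lambda$ from a point of $\gamma$ to $z$; since it starts inside $B(\gamma,\delta_0)$ and ends at $z\notin U$, it must cross $\partial B(\gamma,\delta_0)$, and applying the estimate to its initial segment yields an exit point $w_n\in\Lambda\cap\partial B(\gamma,\delta_0)$ with $d(w_n,W^u_{\mathrm{loc}}(\gamma))<\rho(\tfrac1n)$.

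Finally, I would pass to the limit: the points $w_n$ lie in the compact set $\Lambda\cap\partial B(\gamma,\delta_0)$, so a subsequence converges to some $w_*\in\Lambda\cap\partial B(\gamma,\delta_0)$; as $\rho(\tfrac1n)\to0$ and $W^u_{\mathrm{loc}}(\gamma)$ is closed, $w_*\in W^u_{\mathrm{loc}}(\gamma)\subset W^u(\gamma)$, while $w_*\in\partial B(\gamma,\delta_0)$ gives $w_*\neq\gamma$. Thus $w_*\in\Lambda\cap W^u(\gamma)\setminus\gamma$, as wanted, and the stable case follows by time reversal. The main obstacle is making the escape estimate uniform and rigorous --- in particular, handling the bookkeeping of the small jumps near the exit sphere, and treating the singularity and periodic-orbit cases on the same footing (for a periodic $\gamma$ this is cleanest through a Poincar\'e section inside the tubular neighborhood), which is exactly where a careful $\lambda$-lemma argument is required.
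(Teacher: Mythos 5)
The paper states this as a folklore lemma and offers no proof beyond invoking the $\lambda$-lemma, and your pseudo-orbit exit argument near the local hyperbolic structure of $\gamma$ is the standard (and correct) way to substantiate exactly that citation: an $\varepsilon$-chain issued from $\gamma$ keeps its distance to $W^u_{\mathrm{loc}}(\gamma)$ of order $\varepsilon$ while it stays in the neighborhood, so its exit points accumulate on $\Lambda\cap W^u_{\mathrm{loc}}(\gamma)$ at definite distance from $\gamma$. The details you leave implicit --- the uniform exponential contraction of the distance to $W^u_{\mathrm{loc}}(\gamma)$ inside the neighborhood, the bookkeeping of the last arc before exit, and the fact that chain transitivity of a compact invariant set is preserved under time reversal (which needs chains with times normalized to a bounded interval before reversing) --- are all routine.
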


As a corollary of the above folklore lemma, one has:

\begin{Lemma}\label{Lem:indexsingularity} If $\Lambda$ is a non-trivial chain transitive such that
\begin{itemize}
\item $\Lambda$ admits a dominated splitting $T_{\Lambda}M=E\oplus F$ with respect to the tangent flow and $X(x)\in
F(x)$ for any $x\in\Lambda$,

\item $\Lambda$ contains a hyperbolic singularity $\sigma$,

\end{itemize}

Then ${\rm ind}(\sigma)>\dim E$.
\end{Lemma}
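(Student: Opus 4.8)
The plan is to reduce everything to the linear algebra at the singularity, where the hypothesis $X(x)\in F(x)$ forces the bundle $F$ to meet the stable subspace of $\sigma$. Write $A=DX(\sigma)$, so that the tangent flow at $\sigma$ is $\Phi_t=e^{tA}$, and let $T_\sigma M=E^s(\sigma)\oplus E^u(\sigma)$ be the hyperbolic splitting, with ${\rm ind}(\sigma)=\dim E^s(\sigma)$. Since $\sigma$ is fixed, the dominated splitting restricts to an $A$-invariant decomposition $T_\sigma M=E(\sigma)\oplus F(\sigma)$. First I would record the standard spectral description of this splitting: because the product $\|\Phi_t|_{E(\sigma)}\|\,\|\Phi_{-t}|_{F(\sigma)}\|$ decays exponentially, the real parts of the eigenvalues of $A$ on $E(\sigma)$ are strictly smaller than those on $F(\sigma)$. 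Equivalently, listing the eigenvalues $\mu_1,\dots,\mu_n$ of $A$ with ${\rm Re}\,\mu_1\le\cdots\le{\rm Re}\,\mu_n$ and setting $k=\dim E$, the space $E(\sigma)$ is the sum of the generalized eigenspaces of $\mu_1,\dots,\mu_k$, $F(\sigma)$ is the sum of those of $\mu_{k+1},\dots,\mu_n$, and ${\rm Re}\,\mu_k<{\rm Re}\,\mu_{k+1}$.

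The heart of the argument is to produce a stable eigendirection inside $F(\sigma)$. Since $\Lambda$ is non-trivial and chain transitive and contains the hyperbolic singularity $\sigma$, Lemma~\ref{Lem:nontrivial} yields a point $y\in\Lambda\cap W^s(\sigma)\setminus\{\sigma\}$. Then $\phi_t(y)\in\Lambda$ for all $t$, and $\phi_t(y)\to\sigma$ as $t\to+\infty$ while staying on $W^s(\sigma)$. Each $\phi_t(y)$ is a regular point, so the normalized velocity $u_t=X(\phi_t(y))/|X(\phi_t(y))|$ is a well-defined unit vector tangent to $W^s(\sigma)$ at $\phi_t(y)$. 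As $\phi_t(y)\to\sigma$, the tangent spaces $T_{\phi_t(y)}W^s(\sigma)$ converge to $T_\sigma W^s(\sigma)=E^s(\sigma)$, so any subsequential limit $v$ of $u_t$ is a unit vector in $E^s(\sigma)$. On the other hand, the hypothesis gives $X(\phi_t(y))\in F(\phi_t(y))$, hence $u_t\in F(\phi_t(y))$; by continuity of the bundle $F$ on $\Lambda$ together with $\phi_t(y)\to\sigma$, the limit satisfies $v\in F(\sigma)$. Therefore $F(\sigma)\cap E^s(\sigma)\ne\{0\}$.

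Finally I would combine these two facts. By the spectral description, $F(\sigma)$ is spanned by generalized eigenvectors of $\mu_{k+1},\dots,\mu_n$, the eigenvalues of largest real part. Since $F(\sigma)$ contains the nonzero stable vector $v$, it must contain an eigenvalue of negative real part; as $\mu_{k+1}$ has the smallest real part among those appearing in $F(\sigma)$, this forces ${\rm Re}\,\mu_{k+1}<0$. Consequently all of $\mu_1,\dots,\mu_{k+1}$ have negative real part, so $\sigma$ has at least $k+1$ stable eigenvalues, i.e. ${\rm ind}(\sigma)\ge k+1>k=\dim E$, which is the claim.

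The step I expect to be most delicate is the limiting argument in the second paragraph: one must justify that the normalized flow directions along the incoming stable orbit accumulate on the stable subspace $E^s(\sigma)$ and simultaneously remain in $F$ up to the singularity. The first half rests on the tangency of $W^s(\sigma)$ to $E^s(\sigma)$ at $\sigma$, so that the tangent spaces along the orbit converge to $E^s(\sigma)$ in the Grassmannian; the second half rests on the continuity of $F$ at $\sigma\in\Lambda$. Both are routine, but this is exactly where the hypothesis $X(x)\in F(x)$ is converted into the geometric conclusion $F(\sigma)\cap E^s(\sigma)\ne\{0\}$. Once that is secured, the eigenvalue count is immediate. (The symmetric remark applies: approaching $\sigma$ along $W^u(\sigma)$ would instead place an unstable direction in $F(\sigma)$, but it is the stable direction that produces the index inequality.)
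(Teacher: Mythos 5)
Your argument is correct and follows essentially the same route as the paper's proof: both use Lemma~\ref{Lem:nontrivial} to produce a point of $\Lambda$ in $W^s(\sigma)\setminus\{\sigma\}$, observe that the normalized flow direction along that orbit accumulates in $E^s(\sigma)$ while lying in $F$ by hypothesis and continuity, and then read off the index from the spectral characterization of the dominated splitting at $\sigma$. The only cosmetic difference is that you argue directly (producing a stable vector in $F(\sigma)$), whereas the paper argues by contradiction (assuming ${\rm ind}(\sigma)\le\dim E$ so that $E^s(\sigma)\subset E(\sigma)$ and then contradicting $E(\sigma)\cap F(\sigma)=\{0\}$).
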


\begin{proof}
We will prove this lemma by absurd. If the lemma is not true, one has ${\rm ind}(\sigma)\le \dim E$ for some
hyperbolic singularity $\sigma\in\Lambda$. Since $\Lambda$ has the dominated splitting, one has
$E^s(\sigma)\subset E(\sigma)$. By Lemma~\ref{Lem:nontrivial}, one has there is $x\in
W^s(\sigma)\cap\Lambda\setminus\{\sigma\}$. Thus, $X(\phi_t(x))\subset T_{\phi_t(x)}W^s(\sigma)$ for any $t>0$.
By the assumption one has $X(\phi_t(x))\subset F(\phi_t(x))$. On the other hand, one has
$\lim_{t\to\infty}<X(\phi_t(x))>\subset E^s(\sigma)\subset E(\sigma)$. This fact contradicts to the continuity
of dominated splittings.

\end{proof}

For each compact set $K$ ($K$ may be not invariant), one can define the chain recurrent set in $K$: ${\rm
CR}(X,K)$. We says that $x\in{\rm CR}(X,K)$ if there is a chain transitive set $\Lambda\subset K$ such that
$x\in \Lambda$. ${\rm CR}(X,K)$ has some upper-semi continuity property.

\begin{Lemma}\label{Lem:uppersemicont}
For given $X$ and $K$, if there is a sequence of vector fields $\{X_n\}$ and a sequence of compact sets $K_n$
such that
\begin{itemize}
\item $X_n\to X$ as $n\to\infty$ in the $C^1$ topology,

\item $K_n\to K$ as $n\to\infty$ in the Hausdorff topology,

\end{itemize}

then $\limsup_{n\to\infty}{\rm CR}(X_n,K_n)\subset {\rm CR}(X,K)$.

\end{Lemma}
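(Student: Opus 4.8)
The plan is to transfer periodic pseudo-orbits from $(X_n,K_n)$ to $(X,K)$ by a limiting argument, and then package them into a chain transitive set. Fix $x\in\limsup_{n\to\infty}{\rm CR}(X_n,K_n)$. By definition of $\limsup$ I may pass to a subsequence (still denoted $X_n,K_n$) and choose $x_n\in{\rm CR}(X_n,K_n)$ with $x_n\to x$; thus for each $n$ there is a chain transitive set $\Lambda_n\subset K_n$ for $X_n$ with $x_n\in\Lambda_n$. First I record that $x\in K$: since $x_n\in K_n$, one has $d(x,K)\le d(x,x_n)+d(x_n,K)\le d(x,x_n)+d_H(K_n,K)\to 0$, and $K$ is closed. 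It therefore suffices to show that for every $\varepsilon>0$ there is an $\varepsilon$-pseudo-orbit of $X$ from $x$ to $x$ with time steps $\ge 1$ whose points all lie in $K$; the standard Conley-theoretic fact that the set of points of $K$ chain related to $x$ by such pseudo-orbits is a compact invariant chain transitive subset of $K$ then yields $x\in{\rm CR}(X,K)$.

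So fix $\varepsilon>0$. Since $\Lambda_n$ is chain transitive and $x_n\in\Lambda_n$, for any $\delta>0$ there is a $\delta$-pseudo-orbit of $X_n$ from $x_n$ to $x_n$ with all points in $\Lambda_n$. A preliminary normalization handles the unbounded time steps: any step of length $t_i\ge 1$ may be subdivided by inserting intermediate points along the genuine $X_n$-orbit segment of the current point, producing a pseudo-orbit with the same jumps but all time steps in $[1,2]$; because $\Lambda_n$ is invariant, these inserted points remain in $\Lambda_n\subset K_n$. I thus obtain a $\delta$-pseudo-orbit $\{y_j\}_{j=0}^m\subset K_n$ of $X_n$ with $y_0=y_m=x_n$ and steps $s_j\in[1,2]$. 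Now replace each $y_j$ by a point $y_j'\in K$ with $d(y_j,y_j')\le d_H(K_n,K)$ (possible since $y_j\in K_n$ and $K$ is compact), and set $y_0'=y_m'=x$. For each $j$ the new jump is estimated by
\[
d\big(\phi_{s_j}^X(y_j'),\,y_{j+1}'\big)\le d\big(\phi_{s_j}^X(y_j'),\phi_{s_j}^X(y_j)\big)+d\big(\phi_{s_j}^X(y_j),\phi_{s_j}^{X_n}(y_j)\big)+d\big(\phi_{s_j}^{X_n}(y_j),y_{j+1}\big)+d(y_{j+1},y_{j+1}').
\]
The third term is the original jump, bounded by $\delta$. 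The fourth term is at most $d_H(K_n,K)$ (and for the endpoint it is $d(x_n,x)$). The first term is small when $d_H(K_n,K)$ is small, by uniform continuity of the map $(z,s)\mapsto\phi_s^X(z)$ on the compact set $M\times[1,2]$. The second term is small for $n$ large, since $X_n\to X$ in $C^1$ (hence $C^0$) implies $\phi_s^{X_n}\to\phi_s^X$ uniformly on $M\times[1,2]$. Choosing $\delta$, then $n$ large enough that $d_H(K_n,K)$, $d(x_n,x)$ and the flow discrepancy are all below $\varepsilon/4$, each jump is $<\varepsilon$. Hence $\{y_j'\}$ is an $\varepsilon$-pseudo-orbit of $X$ from $x$ to $x$ with steps in $[1,2]\subset[1,\infty)$ and all points in $K$.

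As $\varepsilon>0$ was arbitrary, $x$ is chain recurrent for $X$ relative to $K$, so by the characterization recalled above $x\in{\rm CR}(X,K)$, proving the inclusion $\limsup_{n\to\infty}{\rm CR}(X_n,K_n)\subset{\rm CR}(X,K)$. The only genuinely delicate point is the transfer step: the pseudo-orbits coming from $\Lambda_n$ carry no a priori bound on their time steps, so one cannot directly invoke uniform flow convergence; the subdivision to steps in $[1,2]$, combined with the invariance of $\Lambda_n$ to keep the inserted points in $K_n$, is what makes the three smallness parameters ($\delta$, $d_H(K_n,K)$, and the flow discrepancy over $n$) simultaneously controllable.
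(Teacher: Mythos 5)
Your transfer estimate (subdividing each time step into pieces in $[1,2]$ using the invariance of $\Lambda_n$, then the four-term triangle inequality controlling the jump after replacing points) is sound, and you correctly identify it as the technical heart of the matter. The gap is in the reduction that precedes it. In this paper ${\rm CR}(X,K)$ is \emph{defined} as the set of points lying in some chain transitive set $\Lambda\subset K$, and a chain transitive set is by definition a compact \emph{invariant} set; the paper explicitly allows $K$ to be non-invariant. Your claim that ``the set of points of $K$ chain related to $x$ by pseudo-orbits contained in $K$ is a compact invariant chain transitive subset of $K$'' is the standard Conley fact for pseudo-orbits in $M$ (or in a compact invariant set), but it is false for pseudo-orbits constrained to a non-invariant $K$: take the rigid rotation flow on $S^1$ and $K=\{0,1,2,3,4,5\}$ (angles in radians). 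Every point of $K$ is joined to itself by a genuine orbit segment broken at points of $K$ with time steps $\ge 1$, so your criterion is satisfied with jumps equal to zero, yet $K$ contains no nonempty invariant set and hence ${\rm CR}(X,K)=\emptyset$. So producing, for every $\varepsilon>0$, an $\varepsilon$-pseudo-orbit from $x$ to $x$ inside $K$ does not by itself place $x$ in ${\rm CR}(X,K)$.

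The repair is to aim your estimate at the right target set. After passing to a further subsequence, let $\Lambda$ be the Hausdorff limit of the chain transitive sets $\Lambda_n$. Then $\Lambda$ is compact, contains $x$, and is contained in $K$ (since $\Lambda_n\subset K_n\to K$); it is $\phi^X_t$-invariant because each $\Lambda_n$ is $\phi^{X_n}_t$-invariant and $\phi^{X_n}_t\to\phi^X_t$ uniformly on $M\times[-T,T]$; and your four-term estimate, applied to $\delta$-pseudo-orbits of $X_n$ joining approximants in $\Lambda_n$ of an arbitrary pair $y,z\in\Lambda$ (with points pushed onto $\Lambda$ rather than onto $K$, at cost $d_H(\Lambda_n,\Lambda)$), shows $\Lambda$ is chain transitive for $X$. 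Then $x\in\Lambda\subset K$ gives $x\in{\rm CR}(X,K)$ directly from the definition, with no appeal to the relative-to-$K$ Conley fact.
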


By the upper-semi continuity property, one has
\begin{Lemma}\label{Lem:robusthyperbolic}
For given $X$ and $K$, if ${\rm CR}(X,K)$ is hyperbolic, then there is a $C^1$ neighborhood ${\cal U}$ of $X$
and a neighborhood $U$ of $K$ such that ${\rm CR}(Y,\overline{U})$ is hyperbolic.

\end{Lemma}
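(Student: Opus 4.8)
The plan is to argue by contradiction, playing the upper-semicontinuity of the relative chain recurrent set (Lemma~\ref{Lem:uppersemicont}) against the classical robustness of hyperbolic sets. Suppose the conclusion fails. Then for the shrinking family of neighborhoods ${\cal U}_n=\{Y:~\|Y-X\|_{C^1}<1/n\}$ and $U_n=\{y\in M:~d(y,K)<1/n\}$, one can choose for every $n$ a vector field $Y_n\in{\cal U}_n$ for which ${\rm CR}(Y_n,\overline{U_n})$ is \emph{not} hyperbolic. Writing $K_n=\overline{U_n}$, one has $Y_n\to X$ in the $C^1$ topology and $K_n\to K$ in the Hausdorff topology.

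The first ingredient is the persistence of hyperbolicity. As ${\rm CR}(X,K)$ is a compact invariant hyperbolic set, there is an open neighborhood $V$ of ${\rm CR}(X,K)$ and a $C^1$ neighborhood ${\cal W}$ of $X$ such that every compact $Y$-invariant set contained in $V$ is hyperbolic, provided $Y\in{\cal W}$. This rests on the fact that hyperbolicity is equivalent to the existence of an adapted field of cones together with uniform expansion/contraction estimates, conditions which remain valid on a whole neighborhood of the set and under $C^1$-small perturbations of the vector field.

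The second ingredient is Lemma~\ref{Lem:uppersemicont} applied to $Y_n\to X$ and $K_n\to K$, which yields $\limsup_{n\to\infty}{\rm CR}(Y_n,K_n)\subset{\rm CR}(X,K)$. I claim this forces ${\rm CR}(Y_n,K_n)\subset V$ for all large $n$. If not, one could pick points $x_{n_k}\in{\rm CR}(Y_{n_k},K_{n_k})\setminus V$; since they lie in the sets $K_{n_k}$ shrinking to the compact set $K$, a subsequence converges to some $x$, and $x\in\limsup_{n}{\rm CR}(Y_n,K_n)\subset{\rm CR}(X,K)\subset V$. But $V$ is open and each $x_{n_k}\notin V$, so the limit satisfies $x\notin V$, a contradiction.

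It remains to combine the two. For $n$ large enough that $Y_n\in{\cal W}$ and ${\rm CR}(Y_n,K_n)\subset V$, the compact invariant set ${\rm CR}(Y_n,\overline{U_n})$ lies in $V$ and is therefore hyperbolic by the persistence statement, contradicting the choice of $Y_n$. The one point requiring care is the precise form of the persistence of hyperbolicity: here ${\rm CR}(X,K)$ may simultaneously contain hyperbolic singularities and regular hyperbolic orbits, and the stable bundle may have different dimensions on different pieces. The cone-field characterization of hyperbolicity treats these uniformly, so no index-by-index bookkeeping is needed, and I expect this to be the only genuinely delicate step.
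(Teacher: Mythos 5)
Your argument is correct and is exactly the one the paper intends: Lemma~\ref{Lem:robusthyperbolic} is stated there as an immediate consequence of the upper-semicontinuity Lemma~\ref{Lem:uppersemicont} combined with the robustness of hyperbolic sets, with no further details given, and your contradiction scheme (choose $Y_n\to X$, $K_n\to K$ with ${\rm CR}(Y_n,K_n)$ non-hyperbolic, trap these sets in a persistence neighborhood $V$, conclude) fills in precisely that outline. The one step to phrase more carefully is the persistence statement when ${\rm CR}(X,K)$ contains singularities: for a flow, a hyperbolic set has its singularities isolated from its regular part (the flow direction degenerates there), so rather than a single cone field one should split $V$ into small balls around the finitely many hyperbolic singularities --- where the maximal invariant set of any nearby $Y$ is just the continuation of the singularity --- and a neighborhood of the non-singular hyperbolic piece, to which the classical robustness applies.
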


\begin{Lemma}\label{Lem:chaintransitiveempty}

For given $X$ and $K$, if ${\rm CR}(X,K)=\emptyset$, then there is a $C^1$ neighborhood ${\cal U}$ of $X$ and a
neighborhood $U$ of $K$ such that ${\rm CR}(Y,\overline{U})=\emptyset$.

\end{Lemma}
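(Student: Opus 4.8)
The plan is to argue by contradiction and reduce everything to the upper-semicontinuity statement of Lemma~\ref{Lem:uppersemicont}. Suppose the conclusion fails. Then for every $C^1$ neighborhood of $X$ and every neighborhood of $K$, one can find a vector field in the former and a compact set of the form $\overline{U}$ (with $U$ the latter neighborhood) whose chain recurrent set is nonempty. I would make this quantitative by fixing a nested sequence of open neighborhoods $U_n\supset\overline{U_{n+1}}$ of $K$ with $\bigcap_n\overline{U_n}=K$, together with a nested sequence of $C^1$ neighborhoods ${\cal U}_n$ of $X$ shrinking to $\{X\}$. The failure of the conclusion then produces, for each $n$, a vector field $X_n\in{\cal U}_n$ with ${\rm CR}(X_n,\overline{U_n})\neq\emptyset$, and hence a point $x_n\in{\rm CR}(X_n,\overline{U_n})$.

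First I would record the two convergences needed to apply Lemma~\ref{Lem:uppersemicont}. Since ${\cal U}_n$ shrinks to $\{X\}$, we have $X_n\to X$ in the $C^1$ topology. Setting $K_n=\overline{U_n}$, the nestedness together with $\bigcap_n\overline{U_n}=K$ gives $K_n\to K$ in the Hausdorff topology. This is the one place where I must be slightly careful, since Hausdorff convergence of a decreasing sequence of compacta to their intersection requires that the intersection be exactly $K$; this is precisely why the neighborhoods $U_n$ are chosen nested with intersection of closures equal to $K$.

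Next, using compactness of $M$, I would pass to a subsequence so that $x_n\to x$ for some $x\in M$. Because $x_n\in\overline{U_n}$ and $\overline{U_n}\to K$ in Hausdorff distance, the limit $x$ lies in $K$. Moreover, $x$ belongs to $\limsup_{n\to\infty}{\rm CR}(X_n,K_n)$ by construction, being a limit of points $x_n\in{\rm CR}(X_n,K_n)$. Lemma~\ref{Lem:uppersemicont} then forces $x\in{\rm CR}(X,K)$, which contradicts the hypothesis ${\rm CR}(X,K)=\emptyset$ and completes the proof.

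The only subtlety, and the step I would treat most carefully, is the Hausdorff convergence $\overline{U_n}\to K$, which dictates the precise choice of the nested neighborhoods; everything else is a routine subsequence extraction feeding directly into the already-established upper-semicontinuity of $X\mapsto{\rm CR}(X,\cdot)$. I would also note that the argument is formally identical in structure to the proof of Lemma~\ref{Lem:robusthyperbolic}, with the empty-set condition playing the role there played by hyperbolicity, so no genuinely new difficulty arises.
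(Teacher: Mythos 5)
Your argument is correct and follows exactly the route the paper intends: the paper states this lemma (together with Lemma~\ref{Lem:robusthyperbolic}) as an immediate consequence of the upper-semicontinuity in Lemma~\ref{Lem:uppersemicont}, and your contradiction argument with nested neighborhoods $\overline{U_n}\to K$ and a convergent subsequence of points $x_n\in{\rm CR}(X_n,\overline{U_n})$ is precisely the standard way to spell that out. The one point you flag --- Hausdorff convergence of the decreasing compacta $\overline{U_n}$ to their intersection $K$ --- is indeed the only detail needing care, and it holds by compactness of $M$.
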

\subsection{Ergodic closing lemma for flows}

Ma\~n\'e's ergodic closing lemma \cite{Man82} was established for flows by Wen \cite{Wen96}. $x\in
M\setminus{\rm Sing}(X)$ is called \emph{strongly closable} if for any $C^1$ neighborhood $\cal U$ of $X$, for
any $\delta>0$, there are $Y\in{\cal U}$ and $p\in M$, $\pi(p)>0$ such that
\begin{itemize}

\item $\phi_{\pi(p)}^Y(p)=p$,

\item $X(x)=Y(x)$ for any $x\in M\setminus\cup_{t\in[0,\pi(p)]}B(\phi_t(x),\delta)$,

\item $d(\phi_t^X(x),\phi_t^Y(p))<\delta$ for each $t\in[0,\pi(p)]$.

\end{itemize}

Let $\Sigma(X)$ be the set of strongly closable points of $X$.
\begin{Lemma}\label{Lem:ergodicclosing}[Ergodic closing lemma for flows \cite{Wen96}]
$\mu(\Sigma(X)\cup{\rm Sing}(X))=1$ for every $T>0$ and every $\phi_T^X$-invariant probability Borel measure $\mu$.

\end{Lemma}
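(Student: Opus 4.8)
The plan is to follow Ma\~n\'e's original argument \cite{Man82} for diffeomorphisms, in the flow-adapted form of Wen \cite{Wen96}. Since a $\phi_T^X$-invariant measure decomposes over its ergodic components, it suffices to treat an ergodic $\phi_T^X$-invariant measure $\mu$; and if $\mu$ charges ${\rm Sing}(X)$ there is nothing to prove, so I may assume $\mu({\rm Sing}(X))=0$ and show $\mu(\Sigma(X))=1$. The complement $M\setminus\Sigma(X)$ is, by definition, the union over a countable basis of $C^1$ neighborhoods ${\cal U}$ of $X$ and over rationals $\delta>0$ of the ``bad'' sets $B({\cal U},\delta)$ consisting of the points whose orbit cannot be closed by some $Y\in{\cal U}$ with $Y=X$ off the $\delta$-tube along the orbit segment and with $\delta$-shadowing. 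Hence it is enough to prove $\mu(B({\cal U},\delta))=0$ for each fixed pair $({\cal U},\delta)$.

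The heart of the matter is a selecting/covering argument. First I would invoke the Poincar\'e recurrence theorem to discard a null set and assume that every remaining point returns arbitrarily close to itself at arbitrarily large times. Fixing a small scale, I would then cover a full-measure portion of $\supp(\mu)$ by finitely many thin flow boxes and, using the $\phi_T^X$-invariance of $\mu$ together with Birkhoff's ergodic theorem, control the asymptotic frequency with which a typical orbit visits each box. Ma\~n\'e's selecting lemma then produces, for a positive-measure set of starting points $x$, a long orbit segment that returns $\delta$-close to its start while spending only a controlled, small fraction of its length near its own past, so that a $\delta$-tube around the segment is embedded and the eventual perturbation has genuinely disjoint support.

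With such a well-separated, almost-closed segment in hand, the next step is to apply the $C^1$ closing lemma for flows (Pugh--Robinson, with the refinements by Wen) to obtain $Y\in{\cal U}$ and a genuine periodic orbit of $Y$ that $\delta$-shadows the segment, with $Y=X$ outside the $\delta$-tube. This exhibits $x$ as strongly closable, contradicting $x\in B({\cal U},\delta)$ on a set of positive measure, and therefore forces $\mu(B({\cal U},\delta))=0$; taking the countable union gives $\mu(\Sigma(X))=1$.

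The main obstacle is the quantitative matching between the admissible $C^1$ size of the perturbation and the geometry of the return: to keep $Y$ inside ${\cal U}$ the velocity correction must be spread over a time proportional to the return time, which is exactly why the measure-theoretic selection of segments whose tubes are simultaneously thin and non-self-recurrent is indispensable. For flows I would additionally have to handle two technical points absent in the discrete case, namely the time reparametrization needed to make the shadowing orbit exactly periodic, and the requirement that the entire construction stay uniformly away from ${\rm Sing}(X)$ --- which is the reason the singular set is absorbed into the conclusion rather than closed.
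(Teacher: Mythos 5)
The paper offers no proof of this lemma: it is quoted directly from Wen \cite{Wen96} as the flow version of Ma\~n\'e's ergodic closing lemma, so there is no internal argument to compare yours against. Judged on its own terms, your outline reproduces the standard Ma\~n\'e--Wen skeleton correctly: reduction to ergodic measures, discarding the singular part, writing $M\setminus(\Sigma(X)\cup{\rm Sing}(X))$ as a countable union of bad sets $B({\cal U},\delta)$ over a countable basis of neighborhoods and rational $\delta$, and killing each bad set by Poincar\'e recurrence, Birkhoff, and a selection/covering argument.

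The one step I would not let pass is the final invocation of the Pugh--Robinson closing lemma as a black box. Strong closability demands more than the existence of a periodic orbit of some nearby $Y$ through a point near $x$: it requires that $Y$ agree with $X$ outside the $\delta$-tube of the \emph{given} orbit segment and, crucially, that the periodic orbit $\delta$-shadow that segment for the \emph{entire} period $[0,\pi(p)]$. The closing lemma, even in its flow version, guarantees neither of these; its periodic orbit may stray far from the original segment away from the closing point, and its perturbation support is not tied to the tube of the segment you selected. The actual content of Ma\~n\'e's and Wen's proofs is precisely that the measure-theoretic selection produces a return so good (close return compared with the separation from the rest of the segment) that the closing perturbation can be written down by hand as a chain of small pushes in disjoint flow boxes along the tube, which is what delivers the support condition and the whole-segment shadowing simultaneously. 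So your sketch identifies the right geometric input but attributes the conclusion to the wrong tool: the explicit perturbation construction, not the closing lemma, is the heart of the proof and would still need to be carried out, together with the reparametrization and singularity-avoidance issues you correctly flag at the end.
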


\subsection{Generic results}

We list all known generic results we need in this paper.
\begin{Lemma}\label{Lem:generic}
There is a dense $G_\delta$ set ${\cal G}\subset{\cal X}^1(M)$ such that for each $X\in{\cal G}$, one has
\begin{enumerate}

\item Every periodic orbit or every singularity of $X$ is hyperbolic.

\item For any non-trivial chain recurrent class $C(\sigma)$, where $\sigma$
is a hyperbolic singularity of index $\dim M-1$, then every separatrix of $W^u(\sigma)$ is dense in $C(\sigma)$.
As a corollary, $C(\sigma)$ is transitive.

\item Given $i\in[0,\dim M-1]$. If there is a sequence of vector fields $\{X_n\}$ such that
\begin{itemize}

\item $\lim_{n\to\infty}X_n=X$,

\item each $X_n$ has a hyperbolic periodic orbits $\gamma_{X_n}$ of index $i$ such that $\lim_{n\to\infty}\gamma_{X_n}=\Lambda$,

\end{itemize}
then there is a sequence of  hyperbolic periodic orbits $\gamma_n$ of index $i$ \emph{of $X$} such that
$\lim_{n\to\infty}\gamma_n=\Lambda$.

\end{enumerate}

\end{Lemma}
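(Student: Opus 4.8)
The plan is to produce each of the three conclusions as a residual property on its own and then take $\mathcal{G}$ to be the intersection of the three residual sets; a countable intersection of dense $G_\delta$ sets in the Baire space $\mathcal{X}^1(M)$ is again dense $G_\delta$, so nothing is lost. Items (1) and (3) are soft Baire--category facts that follow from persistence of hyperbolic critical elements, while item (2) is the genuinely dynamical statement and will rest on the connecting lemma for pseudo-orbits of \cite{BoC04}.

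Conclusion (1) is exactly the weak-Kupka--Smale property, and is the classical Kupka--Smale genericity theorem in the $C^1$ topology: for each $n$ the set of $X$ all of whose singularities and whose periodic orbits of period at most $n$ are hyperbolic is open and dense, and intersecting over $n$ yields a residual set on which every critical element is hyperbolic. For conclusion (3) I fix $i$ and work in the space $\mathcal{K}(M)$ of nonempty compact subsets of $M$ with the Hausdorff metric, which is a separable metric space; let $\{V_j\}_j$ be a countable basis for its topology. For each $j$ set $\mathcal{O}_j=\{X\in\mathcal{X}^1(M): X$ has a hyperbolic periodic orbit of index $i$ whose orbit, regarded as an element of $\mathcal{K}(M)$, lies in $V_j\}$. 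Because a hyperbolic periodic orbit persists and its continuation varies continuously under $C^1$ perturbation, each $\mathcal{O}_j$ is open; hence $\mathcal{R}_j:=\mathcal{O}_j\cup(\mathcal{X}^1(M)\setminus\overline{\mathcal{O}_j})$ is open and dense and $\mathcal{R}:=\bigcap_j\mathcal{R}_j$ is residual. If $X\in\mathcal{R}$ and $X_n\to X$ carry index-$i$ periodic orbits $\gamma_{X_n}\to\Lambda$, then for any basic neighborhood $V_j$ of $\Lambda$ one has $\gamma_{X_n}\in V_j$ for all large $n$, so $X_n\in\mathcal{O}_j$ and $X\in\overline{\mathcal{O}_j}$; since $X\in\mathcal{R}_j$ this forces $X\in\mathcal{O}_j$, i.e. $X$ has an index-$i$ periodic orbit inside $V_j$. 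Choosing $V_j$ from a nested sequence of neighborhoods shrinking to $\{\Lambda\}$ produces the required sequence $\gamma_n\to\Lambda$ of index-$i$ periodic orbits of $X$.

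The real work is in conclusion (2), which I expect to be the main obstacle. Since $\mathrm{ind}(\sigma)=\dim M-1$, the unstable manifold $W^u(\sigma)$ is one-dimensional, so $W^u(\sigma)\setminus\{\sigma\}$ is a union of (at most two) separatrices, each a single regular orbit. The target is that generically the closure of each separatrix equals $C(\sigma)$. That each separatrix is contained in the class and that its closure covers the class are both to be obtained from the chain transitivity of $C(\sigma)$ together with the connecting lemma for pseudo-orbits of \cite{BoC04}: inside the chain transitive class $C(\sigma)$ one can, by an arbitrarily small $C^1$ perturbation supported away from $\sigma$, convert a chain of $\varepsilon$-pseudo-orbits running from a point of a separatrix to any prescribed point of $C(\sigma)$ into a genuine orbit, so that after perturbation the separatrix visits every point of the class to within $\delta$; the index-$(\dim M-1)$ case is the setting treated in \cite{ArR03}. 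Once the closure of a single orbit equals $C(\sigma)$, the class is the closure of one orbit and is therefore transitive, which is the stated corollary.

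To turn this perturbative \emph{chain} density into genuine topological density valid on a residual set, I would run the standard semicontinuity-and-Baire scheme: on the open set where $\sigma$ and its separatrix admit continuations, the map sending $X$ to the closure of a chosen separatrix is lower semicontinuous (compact pieces of $W^u$ continue), so its continuity points form a residual subset, and at a continuity point the perturbative density is promoted to $\overline{\text{separatrix}}\supset C(\sigma)$ for $X$ itself. The main difficulty is precisely this promotion: one must organize the perturbations delivered by the connecting lemma so that the singularity, its hyperbolic continuation, and the connecting pseudo-orbits stay simultaneously under control and remain compatible with the Baire argument---considerably more delicate than the routine arguments behind (1) and (3).
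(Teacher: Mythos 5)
Your decomposition and tools coincide with what the paper itself does: it offers no proof beyond a remark attributing item (1) to the classical Kupka--Smale theorem, item (2) to the connecting lemma for pseudo-orbits of \cite{BoC04} together with the ideas in \cite[Section 4]{MoP03} (explicitly admitting no written version exists), and item (3) to the standard argument recorded in \cite{Wen04}. Your Baire/semicontinuity fillers for (1) and (3) are the standard correct arguments, and your sketch of (2) --- connecting lemma plus promotion at continuity points of $X\mapsto\overline{W^u(\sigma_X)}$ --- is exactly the route the cited references take, so the proposal is essentially the paper's approach.
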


\begin{Remark}
Item 1 is the classical Kupka-Smale theorem \cite{Kup64,Sma63}. Item 2 is a corollary of the connecting lemma
for pseudo-orbits \cite{BoC04}. There is no explicit version like this. \cite[Section 4]{MoP03} gave some ideas
about the proof of Item 2 without using of the terminology of chain recurrence. Item 3 is fundamental, one can
see \cite{Wen04} for instance.

\end{Remark}

\subsection{The saddle value of a singularity}

Assume that $\dim M=d$. For a hyperbolic singularity $\sigma$ of $X\in{\cal X}^r(M)$, one can list all
eigenvalues of $DX(\sigma)$ as $\{\lambda_1,\lambda_2,\cdots,\lambda_i,\lambda_{i+1},\cdots,\lambda_d\}$ such
that
$${\rm Re}(\lambda_1)\le{\rm Re}(\lambda_2)\le\cdots\le{\rm Re}(\lambda_i)<0<{\rm Re}(\lambda_{i+1})\le\cdots\le{\rm Re}(\lambda_d).$$
Then one says that $I(\sigma)={\rm Re}(\lambda_i)+{\rm Re}(\lambda_{i+1})$ is the \emph{saddle value} of
$\sigma$.

By using the $C^1$ connecting lemma for pseudo-orbits \cite{BoC04} and an estimation of Liao \cite{Lia89},
\cite{GaY11} proved that
\begin{Lemma}\label{Lem:sectionalcontractingisolated}
Assume that $\dim M=3$. There is a residual set ${\cal G}\subset{\cal X}^1(M)$ such that for any $X\in{\cal G}$,
if $\sigma$ is a hyperbolic singularity of index $2$ and $I(\sigma)<0$ and the norms of eigenvalues of
$DX(\sigma)$ are mutually different, then $\sigma$ is isolated in ${\rm CR}(X)$: there is a neighborhood $U$ of
$\sigma$ such that $U\cap {\rm CR}(X)=\{\sigma\}$.

\end{Lemma}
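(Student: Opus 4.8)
The plan is to argue by contradiction: I will show that if such a $\sigma$ failed to be isolated in ${\rm CR}(X)$, one could manufacture \emph{sinks} passing arbitrarily close to $\sigma$ and realizing its chain recurrence, which is impossible. Take ${\cal G}$ to be the generic set of Lemma~\ref{Lem:generic}, so that $X$ is Kupka--Smale (hence $\sigma$ is hyperbolic and persists under small perturbation with its eigenvalue data), the connecting lemma for pseudo-orbits of \cite{BoC04} is available, and limits of periodic orbits of nearby fields are realized as periodic orbits of $X$ (item~3). Order the eigenvalues of $DX(\sigma)$ so that ${\rm Re}(\lambda_1)\le{\rm Re}(\lambda_2)<0<{\rm Re}(\lambda_3)$; the hypotheses read $I(\sigma)={\rm Re}(\lambda_2)+{\rm Re}(\lambda_3)<0$ together with pairwise distinct moduli $|\lambda_1|,|\lambda_2|,|\lambda_3|$. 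Note in particular that every sum ${\rm Re}(\lambda_i)+{\rm Re}(\lambda_3)$ is negative, so $\sigma$ is area-contracting on every plane containing the unstable direction.

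\textbf{Step 1 (periodic orbits near $\sigma$).} Suppose $\sigma$ is not isolated in ${\rm CR}(X)$, so its class $C(\sigma)$ is non-trivial. Since $W^u(\sigma)$ is one-dimensional, Lemma~\ref{Lem:nontrivial} provides points of $C(\sigma)$ on $W^u(\sigma)\setminus\{\sigma\}$ and on $W^s(\sigma)\setminus\{\sigma\}$; thus an unstable separatrix of $\sigma$ returns, chain-wise, arbitrarily close to $\sigma$. Applying the $C^1$ connecting lemma for pseudo-orbits \cite{BoC04} to this return, I perturb $X$ to fields $Y_n\to X$ each carrying a genuine periodic orbit that shadows the chain-recurrence loop of $\sigma$ and passes within $1/n$ of $\sigma$. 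By item~3 of Lemma~\ref{Lem:generic}, $X$ itself then carries a sequence of periodic orbits $\gamma_n$, of a fixed index, accumulating on $\sigma$ and lying in the chain recurrence of $\sigma$.

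\textbf{Step 2 (Liao's estimate forces sinks).} This is the heart of the matter and the step I expect to be the main obstacle. Because the proportion of the period of $\gamma_n$ spent in a small neighbourhood of $\sigma$ tends to $1$, the transverse (Floquet) behaviour of $\gamma_n$ is governed by $DX(\sigma)$. The hypothesis of pairwise distinct moduli yields a dominated splitting of the normal bundle along the orbits approaching $\sigma$, which is precisely the framework of Liao's estimate \cite{Lia89}. That estimate identifies the two normal Lyapunov exponents of $\gamma_n$, as $n\to\infty$, with ${\rm Re}(\lambda_1)$ and with the saddle value $I(\sigma)={\rm Re}(\lambda_2)+{\rm Re}(\lambda_3)$. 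The first is negative because $\lambda_1$ is contracting, the second by hypothesis; hence for $n$ large both Floquet multipliers of $\gamma_n$ have modulus $<1$, i.e. $\gamma_n$ is a \emph{sink}. Making this limit rigorous — controlling the normal exponents via Liao's inequality and the distinct-moduli domination, so that $I(\sigma)<0$ genuinely produces the negative weak exponent — is the technical core.

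\textbf{Step 3 (contradiction).} A periodic sink is attracting: there is a small trapping neighbourhood out of which no $\varepsilon$-pseudo-orbit can escape, so the chain recurrent class of $\gamma_n$ is reduced to $\gamma_n$ itself, and in particular $\gamma_n$ cannot be chain related to the saddle $\sigma$. This contradicts Step~1, where $\gamma_n$ was produced precisely to realize the non-trivial chain recurrence through $\sigma$ (equivalently, via the dichotomy of \cite{BoC04} recalled in the Remark, $C(\sigma)$ would be forced to coincide with the trivial homoclinic class $\gamma_n$, contradicting $\sigma\in C(\sigma)$ with $\sigma\neq\gamma_n$). Therefore the assumption fails and $\sigma$ is isolated in ${\rm CR}(X)$, completing the proof.
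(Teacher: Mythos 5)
The paper does not actually prove this lemma in full --- it is quoted from \cite{GaY11} --- but the Remark following it sketches the intended argument, and your proposal starts out along the same lines (connecting lemma to close the recurrence through $\sigma$, then the saddle value $I(\sigma)<0$ together with Liao's estimate to produce attracting behaviour near the loop). The divergence, and the genuine gap, is in how the contradiction is reached. Item~3 of Lemma~\ref{Lem:generic} only gives you periodic orbits $\gamma_n$ of $X$ converging to $\Lambda$ in the \emph{Hausdorff} sense; it does not put $\gamma_n$ inside $C(\sigma)$, and indeed it cannot: once Step~2 shows the $\gamma_n$ are sinks, each $\gamma_n$ is its own trivial chain class and is certainly not chain related to the saddle $\sigma$. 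So the assertion at the end of your Step~1 that the $\gamma_n$ ``lie in the chain recurrence of $\sigma$'' is both unjustified and false, and Step~3 collapses: a sequence of sinks accumulating on $\sigma$ in the Hausdorff topology is perfectly compatible with $\sigma$ being non-isolated in ${\rm CR}(X)$ (the sinks themselves are chain recurrent points arbitrarily near $\sigma$), so no contradiction is obtained. Ruling out sinks accumulating on a chain class is precisely the hard step in the proof of Theorem~A, and there it requires the dominated splitting, Lyapunov stability and the periodic point $p$ --- none of which are hypotheses of the present lemma.

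The paper's sketch closes the argument differently, and this is the idea you are missing: after using the connecting lemma to create a homoclinic loop of $\sigma$, and the condition $I(\sigma)<0$ (plus distinct moduli) to make the loop normally contracting, one performs one more small perturbation so that the unstable separatrix of $\sigma$ falls into the basin of the periodic sink born from the loop, and observes that \emph{this configuration is robust}. A Baire argument then transfers it to the generic $X$ itself: $W^u(\sigma)\subset W^s(P)$ for some sink $P$. That is what contradicts non-triviality of $C(\sigma)$, since Lemma~\ref{Lem:nontrivial} forces $C(\sigma)\cap W^u(\sigma)\setminus\{\sigma\}\neq\emptyset$, while no point of $W^s(P)\setminus P$ can be chain recurrent. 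A secondary issue: for Liao's estimate to identify the normal exponents with ${\rm Re}(\lambda_1)$ and $I(\sigma)$ you need the proportion of the period spent near $\sigma$ to tend to $1$, which requires the transition time away from $\sigma$ to remain bounded; this is exactly what routing the construction through a homoclinic loop (rather than an arbitrary pseudo-orbit loop wandering in $C(\sigma)$) guarantees, and you assert it without arranging it.
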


\begin{Remark}
We give some rough idea of the proof of Lemma~\ref{Lem:sectionalcontractingisolated}. Let $\sigma$ be a
singularity as in Lemma~\ref{Lem:sectionalcontractingisolated}. If it's not isolated form other chain recurrent
points (i.e., $C(\sigma)$ is non-trivial), by using the $C^1$ connecting lemma, one can get a homoclinic loop
associate to the singularity. By an extra perturbation, one can assume that the homoclinic loop is normally
hyperbolic. By another small perturbation, one can put the unstable manifold of the singularity in the stable
manifold of a sink: and this is a robust property! Thus, the unstable manifold of the singularity is in the
stable manifold of a sink generically, which gives a contradiction.

\end{Remark}

One notices that \cite{MPP04} proved that singularities with the properties in
Lemma~\ref{Lem:sectionalcontractingisolated} is disjoint from robustly transitive sets for three-dimensional
flows.

\section{Lyapunov chain recurrent classes: Proof of Theorem~\ref{Thm:partialtosingular}}

\begin{Lemma}\label{Lem:dominatedwithoutsingularity}

Assume that $\dim M=3$. For $C^1$ generic $X\in{\cal X}^1(M)$, if $\Lambda$ is a chain transitive set with the following properties:
\begin{itemize}

\item ${\rm Sing}(X)\cap\Lambda=\emptyset$,

\item ${\cal N}_\Lambda=\Delta^s\oplus\Delta^u$ is a dominated splitting with respect to $\psi_t$,

\end{itemize}
then $\Lambda$ is hyperbolic.

\end{Lemma}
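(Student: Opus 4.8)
The plan is to derive hyperbolicity of $\Lambda$ from the dominated splitting on the normal bundle by a reduction to a $C^2$ argument, combined with the ergodic closing lemma and the generic hypotheses. The main structural idea is to show that the two bundles $\Delta^s$ and $\Delta^u$ of the normal dominated splitting are in fact uniformly contracting and expanding respectively, so that $\Lambda$ becomes hyperbolic. First I would argue by contradiction: suppose $\Lambda$ is not hyperbolic. Since every compact invariant subset inherits the domination on the normal bundle from $\Lambda$ (the splitting is defined over all of $\Lambda$ and restricts), I can pass to a \emph{minimally non-hyperbolic} compact invariant subset $\Lambda_0\subset\Lambda$; such a subset exists by a standard Zorn-type / descending-chain argument, and it is again disjoint from $\mathrm{Sing}(X)$ and still carries a dominated splitting for $\psi_t$. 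This is precisely the setting of Lemma~\ref{Lem:minimalnonhyperbolictransitive}, which tells me $\Lambda_0$ is transitive.

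Next I would invoke the Pujals--Sambarino-type dichotomy for flows, Lemma~\ref{Lem:pujalssambarino}. The obstruction here is that this lemma requires $X$ to be $C^2$ and weak-Kupka-Smale, whereas the hypothesis is only $C^1$ genericity. The standard way around this is a perturbation-and-limit scheme: since we work with $C^1$ generic $X$ and the property ``$\Lambda$ has a dominated splitting on ${\cal N}$ of index $i$ disjoint from singularities'' is robust by Lemma~\ref{Lem:robustdominated}, I would approximate $X$ in the $C^1$ topology by $C^2$ weak-Kupka-Smale vector fields $X_n$ (these are dense) and track the continuation of the relevant invariant sets, using the robustness of the normal dominated splitting to guarantee that nearby compact invariant sets still satisfy the hypotheses of Lemma~\ref{Lem:pujalssambarino}. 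Applying that lemma to the $C^2$ approximants forces the minimal non-hyperbolic pieces to be normally hyperbolic irrational-flow tori.

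The key step is then to show that such a normally hyperbolic torus supporting an irrational flow cannot actually occur inside a $C^1$ generic vector field as a chain-transitive piece of $\Lambda$ — or rather, to extract the desired contradiction with non-hyperbolicity. Here I would use the generic results of Lemma~\ref{Lem:generic}, in particular Item~3 (the persistence of periodic orbits of a fixed index under limits) together with the ergodic closing lemma, Lemma~\ref{Lem:ergodicclosing}: an invariant measure on $\Lambda_0$ is supported (off singularities, which are absent) on strongly closable points, so I can produce genuine periodic orbits of $X$ near $\Lambda_0$ of the appropriate index, whose Lyapunov exponents along $\Delta^s$ and $\Delta^u$ are controlled by the domination. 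The irrational torus, however, carries no periodic orbits and has a neutral direction along the flow, contradicting the closing-lemma production of hyperbolic periodic orbits with uniform exponent bounds forced by domination. This contradiction shows $\Lambda$ was hyperbolic to begin with.

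The hardest part will be the passage from the $C^1$ setting to the $C^2$ hypothesis of Lemma~\ref{Lem:pujalssambarino} while keeping control of the chain-transitive structure and the index of the dominated splitting under the limiting procedure; this is where the robustness Lemmas~\ref{Lem:robustdominated} and \ref{Lem:tangentdominatedrobust}, the upper-semicontinuity Lemma~\ref{Lem:uppersemicont}, and the genericity of periodic-orbit persistence (Item~3 of Lemma~\ref{Lem:generic}) must be carefully combined so that the torus obtained for the approximants descends to a genuine obstruction for $X$ itself. Once the reduction is in place, the uniform contraction of $\Delta^s$ and expansion of $\Delta^u$, hence hyperbolicity of $\Lambda$ (splitting off the flow direction $\langle X\rangle$), follows from the standard relation between the linear Poincaré flow and the tangent flow on a set bounded away from singularities.
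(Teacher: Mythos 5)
Your outline follows the paper's skeleton up to a point: argue by contradiction, extract a minimally non-hyperbolic compact invariant subset, get transitivity from Lemma~\ref{Lem:minimalnonhyperbolictransitive}, and try to invoke the $C^2$ dichotomy of Lemma~\ref{Lem:pujalssambarino} via $C^2$ weak-Kupka-Smale approximants. But the step you yourself flag as the hardest one is a genuine gap, and ``tracking the continuation of the relevant invariant sets'' does not close it. A minimally non-hyperbolic set $\Lambda_0$ is in general not locally maximal, so it has no continuation; and even if you replace it by the maximal invariant (or chain recurrent) set in a neighborhood, non-hyperbolicity is neither an open nor a closed condition, so the $C^2$ approximant $Y$ may perfectly well have a hyperbolic (or empty) invariant set there, in which case Lemma~\ref{Lem:pujalssambarino} gives you nothing. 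The paper's proof is built precisely around the device that repairs this: for a countable family $\{O_n\}$ of open sets it introduces ${\cal H}_n$ (the open set of fields for which ${\rm CR}(\cdot,\overline{O_n})$ is hyperbolic or empty, open by Lemmas~\ref{Lem:robusthyperbolic} and \ref{Lem:chaintransitiveempty}) and ${\cal N}_n$ (the interior of its complement), takes the residual set $\bigcap_n({\cal H}_n\cup{\cal N}_n)$, and concludes that a generic $X$ with a non-hyperbolic $\Gamma\subset O_n$ must lie in the \emph{open} set ${\cal N}_n$. Only then is it legitimate to pick a $C^2$ weak-Kupka-Smale $Y\in{\cal N}_n\cap{\cal U}$ and know that ${\rm CR}(Y,\overline{O_n})$ is still non-hyperbolic and still dominated (Lemma~\ref{Lem:robustdominated}). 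Without this Baire-category mechanism, or an equivalent one, your reduction to the $C^2$ setting is unjustified.

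Your endgame for eliminating the irrational torus is also different from the paper's and does not work as stated. Domination on the normal bundle gives no uniform bound on the Lyapunov exponent in the direction tangent to the torus and transverse to the flow --- that is exactly the neutral direction --- so closing-lemma periodic orbits accumulating on the torus need not have ``uniform exponent bounds forced by domination''; moreover those periodic orbits belong to perturbations of the approximant $Y$, not to $X$, and Item~3 of Lemma~\ref{Lem:generic} is only available for the generic field $X$, so no contradiction is actually extracted. The paper's conclusion is much more direct: since each ${\mathbb T}^2_i$ is normally hyperbolic and isolated in the chain recurrent set, one perturbs $Y$ to $Z$ so that the dynamics on each torus becomes Morse--Smale while $\Lambda_1$ stays hyperbolic; then ${\rm CR}(Z,\overline{O_n})$ is hyperbolic, contradicting $Z\in{\cal N}_n$ (which is open and contains $Y$). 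You would need to replace your closing-lemma argument by something of this kind.
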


\begin{proof}
We take a countable basis $\{U_n\}$ of $M$. Let ${\cal O}=\{O_n\}_{n\in\NN}$ such that each $O_n$ is the union
of finite elements in $\{U_n\}$. For each $n$, one can define
\begin{itemize}

\item ${\cal H}_n\subset{\cal X}^1(M)$ is a subset with the following property: $X\in{\cal H}_n$ if and only if
${\rm CR}(X,\overline{O_n})$ is hyperbolic or ${\rm CR}(X,\overline{O_n})=\emptyset$. By
Lemma~\ref{Lem:robusthyperbolic} and Lemma~\ref{Lem:chaintransitiveempty}, ${\cal H}_n$ is an open set.

\item ${\cal N}_n\subset{\cal X}^1(M)$ is a subset with the following property: $X\in{\cal N}_n$ if and only if
there is a $C^1$ neighborhood ${\cal U}\subset{\cal X}^1(M)$ of $X$ such that for any $Y\in\cal U$, ${\rm
CR}(Y,\overline{O_n})$ is neither hyperbolic nor empty.
\end{itemize}
By definitions, one has ${\cal H}_n\cup{\cal N}_n$ is open and dense in ${\cal X}^1(M)$. Now one takes
$${\cal G}=\bigcap_{n\in\NN}({\cal H}_n\cup{\cal N}_n).$$
It's clear that ${\cal G}$ is a dense $G_\delta$ set. For each $X\in{\cal G}$, we assume that $\Lambda$ is a
non-singular chain transitive set with a dominated splitting ${\cal N}_\Lambda=\Delta^s\oplus\Delta^u$ on the
normal bundle ${\cal N}_\Lambda$ with respect to the linear Poincar\'e flow $\psi_t$. We will prove that
$\Lambda={\rm CR}(X,\Lambda)$ (since $\Lambda$ is chain transitive) is hyperbolic. If not, $\Lambda$ contains a
minimally non-hyperbolic set. By Lemma~\ref{Lem:minimalnonhyperbolictransitive}, $\Lambda$ contains a minimally
non-hyperbolic set $\Gamma$, which is transitive. Take $n\in\NN$ such that

\begin{itemize}
\item $\Gamma$ is contained in $O_n$.

\item There is a $C^1$ neighborhood $\cal U$ of $X$ such that the maximal invariant set in $\overline{O_n}$ of
$Y\in{\cal U}$ has a dominated splitting on the normal bundle with respect to $\psi_t^Y$ by
Lemma~\ref{Lem:robustdominated}.

\end{itemize}

Since $\Gamma$ is not hyperbolic, one has $X\in{\cal N}_n$. Take a $C^2$ weak-Kupka-Smale vector field
$Y\in{\cal N}_n\cap{\cal U}$. Since $Y$ in ${\cal N}_n$, ${\rm CR}(Y,\overline{O_n})$ is not hyperbolic. But
since $Y\in{\cal U}$, the maximal invariant set has a dominated splitting on the normal bundle with respect to
the linear Poincar\'e flow. By Lemma~\ref{Lem:pujalssambarino}, one has the splitting ${\rm
CR}(Y,\overline{O_n})=\Lambda_1\cap\Lambda_2$ such that
\begin{itemize}
\item $\Lambda_1$ is the union of chain transitive sets, and each chain transitive set is hyperbolic. In other
words, $\Lambda_1$ is hyperbolic.

\item $\Lambda_2=\cup_{1\le i\le m}{\mathbb T}^2_i$, each ${\mathbb T}^2_i$ is a normally hyperbolic torus, and
the dynamics on ${\mathbb T}^2_i$ is equivalent to an irrational flow. In other words, ${\mathbb T}^2_i$ is
isolated from other chain recurrent points.

\end{itemize}

By an arbitrarily small perturbation, there is $Z$ $C^1$-close to $Y$ such that
\begin{itemize}

\item ${\rm CR}(Z,\overline{O_n})=\Lambda_1\cup\Lambda_2$.

\item $\Lambda_1$ is still a hyperbolic set of $Z$.

\item The dynamics on ${\mathbb T}^2_i$ of $Z$ is Morse-Smale.

\end{itemize}

As a corollary, ${\rm CR}(Z,\overline{O_n})$ is hyperbolic. This fact contradicts to $Z\in{\cal N}_n$.

\end{proof}

\begin{Lemma}\label{Lem:approximatedbysink}

Assume that $\dim M=3$. For $C^1$ generic $X\in{\cal X}^1(M)$, if $\Lambda$ is a compact invariant set with a
dominated splitting $T_{\Lambda} M=E\oplus F$ of index 1 with respect to the tangent flow $\Phi_t$ such that
\begin{itemize}
\item There is $T>0$ such that for every singularity $\sigma\in\Lambda$, one has $|{\rm
Det}(\Phi_T|_{F(\sigma)})|>1$.

\item For every $x\in\Lambda\setminus{\rm Sing}(X)$, one has $<X(x)>\subset F(x)$.

\item $F$ is not sectional expanding,

\end{itemize}
then there is a sequence of sinks $\{P_n\}$ such that $\lim_{n\to\infty}P_n=\Gamma\subset\Lambda$.

\end{Lemma}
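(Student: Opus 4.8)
The plan is to detect the failure of sectional expansion by an invariant measure, close up a nearby orbit into a sink, and then transfer the sinks to $X$ by genericity. Since $\dim M=3$ and the splitting has index $1$, we have $\dim E=1$ and $\dim F=2$, and the hypothesis $\langle X(x)\rangle\subset F(x)$ lets us apply Lemma~\ref{Lem:tangenttonormal}: over $\Lambda\setminus{\rm Sing}(X)$ the normal bundle carries a dominated splitting ${\cal N}=\Delta^s\oplus\Delta^u$ for the linear Poincar\'e flow $\psi_t$, with $\Delta^u=F\cap{\cal N}$ dominating the one-dimensional $\Delta^s$ coming from $E$. The key bookkeeping identity is that along any orbit the flow direction contributes trivially to ${\rm Det}\,\Phi_t|_F$, so for a periodic orbit $p$ of period $\pi$ one has $|{\rm Det}\,\Phi_\pi|_{F(p)}|$ equal to the $\Delta^u$-multiplier of the return map, while for any invariant measure the $\|X\|$ and angle corrections form coboundaries, so $\int\log|{\rm Det}\,\Phi_T|_F|\,d\mu$ equals $T$ times the Lyapunov exponent of $\psi_t$ on $\Delta^u$.

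Because $\dim F=2$, $F$ is sectional expanding exactly when the scalar cocycle $\log|{\rm Det}\,\Phi_T|_F|$ is uniformly expanding over $\Lambda$. I would first establish the standard dichotomy that if this cocycle is not uniformly expanding then, taking empirical measures along orbit segments on which $\tfrac1t\log|{\rm Det}\,\Phi_t|_F|\le 0$, there is an ergodic $X$-invariant measure $\mu$ with $\int\log|{\rm Det}\,\Phi_T|_F|\,d\mu\le 0$. By ergodicity $\mu$ is either a Dirac mass at a singularity or is carried by the regular set; the first hypothesis, $|{\rm Det}\,\Phi_T|_{F(\sigma)}|>1$ for every singularity $\sigma\in\Lambda$, rules out the singular case, so $\mu({\rm Sing}(X))=0$ and the integrand is continuous on a compact neighborhood of ${\rm supp}(\mu)$ inside the regular set.

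Next I would close up $\mu$. By the ergodic closing lemma (Lemma~\ref{Lem:ergodicclosing}) a full-measure set of points is strongly closable; choosing a generic such point and applying the definition with a small $\delta$ and a small $C^1$ neighborhood of $X$, I obtain $Y$ arbitrarily $C^1$-close to $X$ with a periodic orbit $p$ whose orbit $\delta$-shadows a long segment realizing the average of $\mu$. By continuity of $\log|{\rm Det}\,\Phi_T|_F|$ and the shadowing, the area multiplier of the return map along $F$, equivalently the $\Delta^u$-multiplier $|\sigma^u|$, is at most $1$. A further arbitrarily small perturbation supported near $p$ (of Franks type), which by Lemma~\ref{Lem:tangentdominatedrobust} preserves the dominated splitting, makes $|\sigma^u|<1$; by domination the weaker multiplier satisfies $|\sigma^s|<|\sigma^u|<1$, so both normal multipliers contract and $p$ is a sink of $Y$ lying close to ${\rm supp}(\mu)\subset\Lambda$.

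Finally, performing this for perturbations and shadowing tolerances shrinking to zero yields a sequence $Y_n\to X$, each with a sink $Q_n$; passing to a subsequence we may assume $Q_n\to\Gamma$ in the Hausdorff topology with $\Gamma\subset\overline{{\rm supp}(\mu)}\subset\Lambda$. In dimension three a sink is precisely an index-$2$ periodic orbit, since its two-dimensional normal bundle is entirely contracting, so Lemma~\ref{Lem:generic}(3) applied with $i=2$ gives, for the generic $X$, index-$2$ periodic orbits $P_n$ of $X$ with $P_n\to\Gamma$; these $P_n$ are the required sinks. The main obstacle is the beginning of the second paragraph --- turning the non-uniform failure of sectional expansion into an invariant measure with nonpositive determinant integral while correctly excluding singular measures --- together with upgrading the closed orbit from $|\sigma^u|\le 1$ to a strict sink; the domination then supplies the contraction of $\Delta^s$ for free.
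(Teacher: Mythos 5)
Your proposal follows essentially the same route as the paper's proof: the failure of sectional expansion yields an ergodic measure with $\int\log|{\rm Det}\,\Phi_T|_F|\,d\mu\le 0$, the hypothesis on singularities forces $\mu({\rm Sing}(X))=0$, the ergodic closing lemma produces nearby periodic orbits with nearly nonpositive $F$-determinant, a Franks-type perturbation turns them into sinks (domination handling the $E$-direction for free), and Lemma~\ref{Lem:generic}(3) with $i=2$ transfers the sinks back to the generic $X$. The only cosmetic difference is your detour through the linear Poincar\'e flow and the $\Delta^u$-multiplier, whereas the paper works directly with the continuous function $\varphi(x)=\log|{\rm Det}(\Phi_T|_{F(x)})|$ on all of $\Lambda$, which avoids any delicacy near the singularities.
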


\begin{proof}
One define $\varphi(x)=\log|{\rm Det}(\Phi_T|_{F(x)})|$ for each $x\in\Lambda$. We will prove this lemma by
absurd. If for any $x\in\Lambda$, there is $n(x)\in\NN$ such that $\varphi(\phi_{n(x)T}(x))>0$, then by a
compact argument one can get that $F$ is sectional expanding. Thus, there is an ergodic invariant measure $\mu$
with ${\rm supp}(\mu)\subset\Lambda$ such that
$$\int \varphi d\mu\le 0.$$
By Lemma~\ref{Lem:ergodicclosing}, for the set of strongly closable set $\Sigma(X)$, one has
$\mu(\Sigma(X)\cup{\rm Sing}(X))=1$. Since for each singularity $\sigma\in\Lambda$ one has ${\rm
Det}(\Phi_T|_{F(\sigma)})>1$ by assumption, one gets $\mu(\Sigma(X)\setminus{\rm Sing}(X))=1$. Since $\varphi$
is a continuous function, by Birkhoff's ergodic theorem, one has for almost every point $x\in{\rm
supp}(\mu)\cap\Sigma(X)$ with respect to $\mu$ such that
$$\lim_{n\to\infty}\frac{1}{n}\sum_{i=0}^{n-1}\varphi(\phi_{i T}(x))=\int\varphi d\mu.$$

Without loss of generality, one has  that $x$ is not periodic. Otherwise, since $X$ is $C^1$ generic, one has
that $x$ is a hyperbolic periodic point. This will imply that the orbit of $x$ is a periodic sink in $\Lambda$.
Thus one can get the conclusion.

Since $x$ is a strong closable point, for any $\varepsilon>0$ there are $Y$ which is $\varepsilon$-$C^1$-close
to $X$ and $p_\varepsilon\in M$, $\pi(p_\varepsilon)>0$ such that
\begin{itemize}

\item $\phi_{\pi(p_\varepsilon)}^Y(p_\varepsilon)=p_\varepsilon$,

\item $d(\phi_t^X(x),\phi_t^Y(p_\varepsilon))<\delta$ for each $t\in[0,\pi(p_\varepsilon)]$.

\end{itemize}

Since $x$ is non-periodic, one has $\pi(p_\varepsilon)\to\infty$ as $\varepsilon\to 0$. By the continuity
property of dominated splittings, one has the orbit of $p_\varepsilon$ with respect to $Y$ also a dominated
splitting $E_\varepsilon\oplus F_\varepsilon$ and $F_\varepsilon\to F$, $E_\varepsilon\to E$ as $\varepsilon\to
0$ by the Grassman metric. As a corollary, one has
$$\lim_{\varepsilon\to 0}\frac{1}{[\pi(p_\varepsilon)/T]}\sum_{i=0}^{n-1}\log|{\rm Det}(D\Phi_T^Y|_{F_\varepsilon(\phi_{i T}(p_\varepsilon))})|\le 0.$$

Since one has the dominated splitting on the orbit of each periodic orbit, one has for each $p_\varepsilon$ the
largest Lyapunov exponent along the orbit of $p_\varepsilon$ tends to zero as $\varepsilon\to 0$.

By a Lemma of Franks for flows \cite{Man82,BGV06}, in this case, one can change the index of $\{{\rm
Orb}(p_\varepsilon)\}$ to be smaller by an arbitrarily small perturbation. As a corollary, there is a sequence
of vector fields $\{X_n\}$ such that
\begin{itemize}
\item $\lim_{n\to\infty}X_n=X$ in the $C^1$ topology.

\item Each $X_n$ has a sink $\gamma_n$ such that $\lim_{n\to\infty}\gamma_n=\Gamma$.

\end{itemize}

Since $X$ is $C^1$ generic, by Lemma~\ref{Lem:generic}, one can gets the conclusion.

\end{proof}

Now we will manage to prove Theorem~\ref{Thm:dominatedtopartial}. Assume that {\bf we are under the assumptions
of Theorem~\ref{Thm:dominatedtopartial}.} First we have
\begin{Lemma}\label{Lem:flowdirection}

For every regular point $x\in C(\sigma)$, one has
\begin{itemize}
\item either, $X(x)\in E(x)$,

\item or, $X(x)\in F(x)$.

\end{itemize}
\end{Lemma}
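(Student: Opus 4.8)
The plan is to exploit the continuity and invariance of the dominated splitting $T_{C(\sigma)}M=E\oplus F$, together with the fact that the flow direction $\langle X(x)\rangle$ varies continuously on regular points and is itself invariant under $\Phi_t$. The key observation is that for a regular point $x$, the one-dimensional line $\langle X(x)\rangle$ is a $\Phi_t$-invariant subbundle along the orbit of $x$, since $\Phi_t(X(x))=X(\phi_t(x))$. Because $E$ and $F$ are $\dim E=1$ and $\dim F=2$ in this three-dimensional setting (by the index hypothesis, which we may take to be $1$; the other case is symmetric for $-X$), the domination forces the flow line either to lie inside $E$ or to project nontrivially onto $F$ in a way controlled by the contraction/expansion rates.

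First I would decompose $X(x)=v_E(x)+v_F(x)$ according to $E(x)\oplus F(x)$ and study the two continuous scalar quantities $v_E$ and $v_F$ along an orbit. The heart of the argument is a standard domination dichotomy: if $v_E(x)\neq 0$ and $v_F(x)\neq 0$ simultaneously at some point, then comparing the growth of $\|\Phi_t(v_E)\|$ and $\|\Phi_t(v_F)\|$ under the defining inequality $\|\Phi_t|_{E}\|\,\|\Phi_{-t}|_{F}\|\le Ce^{-\lambda t}$ leads to a contradiction when $t\to\pm\infty$, because the flow line $\langle X(\phi_t(x))\rangle$ cannot simultaneously align with both the dominated and the dominating bundle in the limit. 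Concretely, as $t\to+\infty$ the $E$-component must become exponentially negligible relative to the $F$-component, so the limiting direction of $X(\phi_t(x))$ lies in $F$; as $t\to-\infty$ the reverse holds and the limiting direction lies in $E$. I would then argue that such alignment must in fact hold identically, not merely in the limit, by using the invariance of $\langle X\rangle$ and the contradiction with continuity of the splitting analogous to the argument already used in the proof of Lemma~\ref{Lem:indexsingularity}.

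The cleanest route is to fix a point $x$ and consider the set of $x\in C(\sigma)\setminus\mathrm{Sing}(X)$ where $X(x)\in E(x)$, and its complement. I would show both are relatively closed in the regular part: the condition $X(x)\in E(x)$ is closed by continuity of $E$ and of $X$, and the argument above shows the complement forces $X(x)\in F(x)$, which is also a closed condition. Since $C(\sigma)$ is chain transitive, hence chain connected, I expect to pin down the dichotomy pointwise rather than needing global constancy. The main technical content is precisely the limiting-direction argument: establishing that no regular point can have $X(x)$ with nonzero components in \emph{both} $E$ and $F$, by contradicting the domination inequality or the continuity of the splitting at the $\omega$- or $\alpha$-limit.

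The hard part will be handling the mixed case rigorously: one must rule out a regular orbit whose flow direction genuinely oscillates or spirals so that $X(x)$ has components in both bundles without ever producing a limit that violates continuity. The subtlety is that $C(\sigma)$ contains a singularity $\sigma$, so orbits can spend long times near $\sigma$ where the splitting $E(\sigma)\oplus F(\sigma)$ is governed by the eigenspaces of $DX(\sigma)$; I would need to check that the flow direction of a regular orbit passing near $\sigma$ still respects the dichotomy, using that $X(\sigma)=0$ but $X(x)\to 0$ in a controlled direction dictated by the stable/unstable subspaces. I anticipate this singular-accumulation scenario is where the continuity-of-splitting contradiction (in the style of Lemma~\ref{Lem:indexsingularity}) does the decisive work, transferring the limiting flow direction into $E(\sigma)$ or $F(\sigma)$ and clashing with the component that domination forbids.
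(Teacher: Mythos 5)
There is a genuine gap, and you have located it yourself: the ``mixed case'' where $X(x)$ has nonzero components in both $E(x)$ and $F(x)$. Your asymptotic observation is correct as far as it goes --- domination makes the direction of $\Phi_t(X(x))=X(\phi_t(x))$ align with $F$ as $t\to+\infty$ and with $E$ as $t\to-\infty$ --- but this is \emph{not} by itself a contradiction: a generic vector in a dominated splitting behaves exactly this way, and invariance of $\langle X\rangle$ plus closedness of the two conditions does not upgrade ``alignment in the limit'' to ``alignment at $x$.'' The decisive ingredient you are missing is \emph{recurrence of a single orbit to itself}. The paper first invokes genericity (Lemma~\ref{Lem:generic}) to get that $C(\sigma)$ is transitive, hence the set $\tilde C=\{y: \omega(y)=\alpha(y)=C(\sigma)\}$ is dense. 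For $y_0\in\tilde C$ one has $y_0\in\omega(y_0)$, so there are times $t_n\to\infty$ with $\phi_{t_n}(y_0)\to y_0$; then $\Phi_{t_n}(X(y_0))=X(\phi_{t_n}(y_0))\to X(y_0)$ by continuity of $X$, while domination plus continuity of the splitting forces this limit into $F(y_0)$ whenever the $F$-component of $X(y_0)$ is nonzero. Since the limit point is $y_0$ itself, this yields $X(y_0)\in F(y_0)$ exactly, killing the mixed case at $y_0$. The conclusion then propagates to every regular point because the orbit of $y_0$ is dense and the condition $X\in F$ is closed and flow-invariant. Your argument never produces a point that returns to itself, so the ``limiting direction lies in $F$'' statement lands at some other point $z\in\omega(x)$ and gives no information about $x$.

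Two further remarks. First, chain transitivity (which you invoke) is not enough to produce recurrent points with dense orbits; the paper needs actual transitivity of $C(\sigma)$, which is a $C^1$-generic fact, so the genericity hypothesis is doing real work here and your proposal omits it. Second, your worry about orbits passing near the singularity is a non-issue in the paper's scheme: the recurrent point $y_0$ is regular and the return points $\phi_{t_n}(y_0)$ converge to $y_0$ itself, so $X(\phi_{t_n}(y_0))\to X(y_0)\neq 0$ and no degeneracy near $\sigma$ enters. Also, the dimensions of $E$ and $F$ play no role in this lemma (they are only pinned down later, in Lemma~\ref{Lem:onebundlehyperbolic}), so your reduction ``we may take the index to be $1$'' is unnecessary.
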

\begin{proof}

By Lemma~\ref{Lem:generic}, $C(\sigma)$ is transitive. Thus, the set
$$\tilde{C}=\{x\in C(\sigma):~\omega(x)=\alpha(x)=C(\sigma)\}$$
is dense in $C(\sigma)$.

By the invariance property, if for some $y\in \tilde{C}$, one has either $X(y)\in E(y)$ or $X(y)\in F(y)$, then
one can get the conclusion. Thus, one can assume that it's not true. Thus, for any $y\in\tilde{C}$, one has
$X(y)\notin E(y)\cup F(y)$. Take $y_0\in\tilde{C}$. There are a sequence of times $\{t_n\}$ such that
\begin{itemize}

\item $\lim_{n\to\infty}t_n=\infty$.

\item $\lim_{n\to\infty}\phi_{t_n}(y_0)=y_0$.

\end{itemize}
By the dominated property, one will have $\lim_{n\to\infty}\Phi_{t_n}(X(y_0))\in F(y_0)$. Since
$\Phi_{t_n}(X(y_0))=X(\phi_{t_n}(y_0))$ and the vector field $X$ is continuous with respect to the space
variable $x\in M$, one has $X(y_0)=\lim_{n\to\infty}X(\phi_{t_n}(y_0))=\lim_{n\to\infty}\Phi_{t_n}(X(y_0))$.
This will imply that $X(y_0)\in F(y_0)$, which gives a contradiction.

\end{proof}

\begin{Corollary}\label{Cor:singular-flowdirection}

If ${\rm ind}(\sigma)=2$, then for any regular point $y\in C(\sigma)$, $X(y)\in F(y)$. As a corollary,
singularities in $C(\sigma)$ will have the same index.

\end{Corollary}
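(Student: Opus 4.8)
The plan is to first upgrade the pointwise dichotomy of Lemma~\ref{Lem:flowdirection} into the global statement $X(y)\in F(y)$ for \emph{every} regular $y\in C(\sigma)$, and then read off the indices of the singularities from Lemma~\ref{Lem:indexsingularity}. Since ${\rm ind}(\sigma)=2=\dim M-1$, item 2 of Lemma~\ref{Lem:generic} applies, so $C(\sigma)$ is transitive; fix a regular point $z$ whose orbit is dense in $C(\sigma)$. On the set ${\rm Reg}$ of regular points of $C(\sigma)$ I would consider $A=\{y\in{\rm Reg}:X(y)\in F(y)\}$ and $B=\{y\in{\rm Reg}:X(y)\in E(y)\}$.

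By Lemma~\ref{Lem:flowdirection} we have $A\cup B={\rm Reg}$, and since $E(y)\cap F(y)=0$ while $X(y)\neq 0$ on ${\rm Reg}$, the sets $A$ and $B$ are disjoint. Both are closed in ${\rm Reg}$ (the conditions $X\in F$ and $X\in E$ pass to limits by continuity of $X$, $E$ and $F$) and both are flow-invariant (by invariance of $\langle X\rangle$, $E$ and $F$). Hence $A$ and $B$ are clopen in ${\rm Reg}$, and the dense orbit of $z$, being invariant, lies entirely in one of them; being closed, whichever one contains this dense orbit must equal all of ${\rm Reg}$. So either $X$ lies in $F$ on all of ${\rm Reg}$, or $X$ lies in $E$ on all of ${\rm Reg}$.

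It remains to exclude the second alternative. Suppose $X(y)\in E(y)$ for every $y\in{\rm Reg}$. Passing to the reversed flow $-X$, the set $C(\sigma)$ is still a non-trivial chain transitive set, the splitting $T_{C(\sigma)}M=F\oplus E$ is dominated for the tangent flow of $-X$ (domination reverses under time reversal, so $F$ becomes the dominated and $E$ the dominating bundle), and the flow direction of $-X$ satisfies $\langle -X(y)\rangle=\langle X(y)\rangle\subset E(y)$, i.e.\ it lies in the dominating bundle. Lemma~\ref{Lem:indexsingularity} applied to $-X$ then gives ${\rm ind}_{-X}(\sigma)>\dim F\ge 1$, hence ${\rm ind}_{-X}(\sigma)\ge 2$; but ${\rm ind}_{-X}(\sigma)=\dim M-{\rm ind}(\sigma)=1$, a contradiction. (Alternatively one argues directly: by Lemma~\ref{Lem:nontrivial} pick $x\in C(\sigma)\cap W^u(\sigma)\setminus\{\sigma\}$; as ${\rm ind}(\sigma)=2$ the manifold $W^u(\sigma)$ is one-dimensional, so $\langle X(\phi_{-t}(x))\rangle\to E^u(\sigma)$, which lies in $F(\sigma)$ because domination places the expanding eigendirection in $F$, whence invariance of $E$ would force $E^u(\sigma)\subset E(\sigma)$, contradicting $E(\sigma)\cap F(\sigma)=0$; thus $x\in A$ and $A\neq\emptyset$.) Therefore $X(y)\in F(y)$ for every regular $y\in C(\sigma)$.

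With the flow direction now everywhere in $F$, Lemma~\ref{Lem:indexsingularity} applies to each hyperbolic singularity $\sigma'\in C(\sigma)$ and gives ${\rm ind}(\sigma')>\dim E$. Taking $\sigma'=\sigma$ shows $2>\dim E$, so $\dim E=1$ (the splitting being proper) and $\dim F=2$. Consequently every singularity $\sigma'\in C(\sigma)$ satisfies ${\rm ind}(\sigma')\ge 2$; since $C(\sigma)$ is non-trivial its singularities are neither sinks nor sources, so their indices lie in $\{1,2\}$, forcing ${\rm ind}(\sigma')=2$. Thus all singularities in $C(\sigma)$ share the index $2$. The main obstacle is the global upgrade carried out in the second paragraph: turning the pointwise alternative of Lemma~\ref{Lem:flowdirection} into a uniform statement, which is exactly where transitivity (hence the genericity hypothesis) is indispensable. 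Once $X\in F$ holds everywhere, the index bookkeeping through Lemma~\ref{Lem:indexsingularity} is routine.
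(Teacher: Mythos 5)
Your proof is correct and follows essentially the same route as the paper: the global dichotomy is obtained from Lemma~\ref{Lem:flowdirection} plus transitivity, and the alternative $X\subset E$ is excluded by the behaviour of the flow direction along the invariant manifolds of the singularity. The only cosmetic difference is that you channel both that exclusion and the equality of indices through Lemma~\ref{Lem:indexsingularity} (applied to $-X$ and then to $X$), whereas the paper runs the same invariant-manifold/continuity argument directly at an index-$2$ and, for the second assertion, at a hypothetical index-$1$ singularity; both versions are sound.
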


\begin{proof}
The fact the ${\rm ind}(\sigma)=2$ implies that $E(\sigma)\subset E^s(\sigma)$. We will prove this corollary by
absurd. If it's not true, by Lemma~\ref{Lem:flowdirection}, one has $X(x)\in E(x)$ for any regular point $x\in
C(\sigma)$. This implies that regular points will approximate $\sigma$ only in the stable subspace
$E^s(\sigma)$. By $\lambda$-lemma, one knows that regular points will accumulate both $W^s(\sigma)$ and
$W^u(\sigma)$. This fact gives a contradiction.

If singularities in $C(\sigma)$ have different indices, then there are hyperbolic singularities
$\sigma_1,\sigma_2\in C(\sigma)$ such that ${\rm ind}(\sigma_1)=1$ and ${\rm ind}(\sigma_2)=2$. Thus by previous
arguments, one has for every regular point $x$, one has $X(x)\in E(x)$ and $X(x)\in F(x)$. This contradiction
ends the proof.

\end{proof}

\begin{Lemma}\label{Lem:onebundlehyperbolic}
If ${\rm ind}(\sigma)=2$, then $\dim E=1$ and $E$ is contracting.

\end{Lemma}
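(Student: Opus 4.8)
The plan is to establish the two conclusions in turn, the dimension count being immediate and the contraction requiring an ergodic-closing argument. Since ${\rm ind}(\sigma)=2$, Corollary~\ref{Cor:singular-flowdirection} gives $X(x)\in F(x)$ for every regular $x\in C(\sigma)$, so $C(\sigma)$ is a non-trivial chain transitive set carrying a dominated splitting $T_{C(\sigma)}M=E\oplus F$ with the flow direction inside $F$ and containing the hyperbolic singularity $\sigma$. Lemma~\ref{Lem:indexsingularity} then applies and yields ${\rm ind}(\sigma)>\dim E$, i.e. $2>\dim E$; as $\dim E\ge 1$ this forces $\dim E=1$ (and hence $\dim F=2$). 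This disposes of the first assertion with no extra work.

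For the contraction of the line bundle $E$ I would argue by contradiction. If $E$ is not uniformly contracting, then by the standard characterization of uniform contraction of a one-dimensional cocycle over a compact invariant set (the map $\mu\mapsto\int\log\|\Phi_T|_{E}\|\,d\mu$ is affine and upper semicontinuous on the invariant measures, so its supremum is attained at an ergodic measure, and uniform contraction is equivalent to this supremum being negative) there is an ergodic measure $\mu$ supported on $C(\sigma)$ with $\int\log\|\Phi_T|_{E}\|\,d\mu\ge 0$. The first step is to see that $\mu$ cannot sit on a singularity: every singularity of $C(\sigma)$ has index $2$ by Corollary~\ref{Cor:singular-flowdirection}, so $E(\sigma')\subset E^s(\sigma')$ and $\log\|\Phi_T|_{E(\sigma')}\|<0$ there; thus $\mu({\rm Sing}(X))\neq 1$, and by ergodicity $\mu({\rm Sing}(X))=0$. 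Lemma~\ref{Lem:ergodicclosing} then gives $\mu(\Sigma(X)\setminus{\rm Sing}(X))=1$, so I may pick a regular, strongly closable, $\mu$-generic point $x$ at which the Birkhoff average of $\log\|\Phi_T|_{E}\|$ equals $\int\log\|\Phi_T|_{E}\|\,d\mu\ge 0$. If this generic orbit were periodic for $X$ it would already be a contradiction, since domination applied to the iterates of the period (see the next paragraph) forces its $E$-exponent to be at most $-\lambda$; so I may assume $x$ is non-periodic.

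I would then close $x$ up: for each small $\varepsilon$ the strong closability of $x$ produces $Y$ that is $\varepsilon$-$C^1$-close to $X$ and a genuine periodic orbit ${\rm Orb}(p_\varepsilon)$ shadowing a long segment of the orbit of $x$, with period $\pi(p_\varepsilon)\to\infty$ (as $x$ is non-periodic). By Lemma~\ref{Lem:tangentdominatedrobust} the splitting persists as $E_\varepsilon\oplus F_\varepsilon$ with $E_\varepsilon\to E$, $F_\varepsilon\to F$, and the flow direction $Y(p_\varepsilon)$ lies in $F_\varepsilon$ up to an angle tending to $0$; hence the $E_\varepsilon$-exponent of ${\rm Orb}(p_\varepsilon)$ converges to $\int\log\|\Phi_T|_{E}\|\,d\mu\ge 0$. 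On the other hand, over one full period $\Phi^Y_{-\pi(p_\varepsilon)}$ fixes $Y(p_\varepsilon)$, so $\|\Phi^Y_{-\pi(p_\varepsilon)}|_{F_\varepsilon}\|\ge 1-o(1)$, and the defining inequality of the dominated splitting gives
$$\|\Phi^Y_{\pi(p_\varepsilon)}|_{E_\varepsilon}\|\le C\,e^{-\lambda\,\pi(p_\varepsilon)}\,\|\Phi^Y_{-\pi(p_\varepsilon)}|_{F_\varepsilon}\|^{-1}\le C\,e^{-\lambda\,\pi(p_\varepsilon)}(1+o(1)).$$
Thus the $E_\varepsilon$-exponent of ${\rm Orb}(p_\varepsilon)$ is at most $-\lambda+\log C/\pi(p_\varepsilon)+o(1)$, which is negative for $\varepsilon$ small since $\pi(p_\varepsilon)\to\infty$. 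This contradicts the previous paragraph and proves that $E$ is contracting.

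The step I expect to be most delicate is the passage from ``$E$ has a nonnegative exponent for some invariant measure'' to a genuine \emph{periodic} orbit on which the flow direction contributes exactly (or asymptotically) a unit factor in $F$. The reason the flow direction must be handled on closed orbits rather than pointwise on the $\mu$-generic orbit is that $C(\sigma)$ may accumulate on $\sigma$, where $\|X\|\to 0$, so one cannot simply assert that $\langle X\rangle$ carries a zero Lyapunov exponent for $\mu$; closing up over a full period is exactly what makes $\|\Phi^Y_{-\pi(p_\varepsilon)}|_{F_\varepsilon}\|\ge 1-o(1)$ rigorous and lets domination win.
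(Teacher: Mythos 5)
Your proposal is correct, but the route you take for the contraction of $E$ is genuinely different from the paper's. For $\dim E=1$ you invoke Lemma~\ref{Lem:indexsingularity}, which is cleaner than (but equivalent to) the paper's direct re-derivation. For the contraction, the paper avoids all ergodic-theoretic and perturbative machinery: it shows that for \emph{every} $x\in C(\sigma)$ there is a finite time $t_x$ with $\|\Phi_{t_x}|_{E(x)}\|<1$ --- either because $\omega(x)\subset{\rm Sing}(X)$ and $E\subset E^s$ at every singularity of the class, or because the orbit of $x$ returns to a fixed neighborhood of a regular point of $\omega(x)$, where $|X|$ is bounded below; at such return times the flow direction, which lies in $F$, witnesses $\|\Phi_{-t_n}|_{F(\phi_{t_n}(x))}\|\gtrsim 1$, so the domination inequality forces $\|\Phi_{t_n}|_{E(x)}\|\le C'e^{-\lambda t_n}$ --- and then a classical compactness argument upgrades pointwise eventual contraction to uniform contraction. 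Your argument instead extracts an ergodic measure with nonnegative $E$-exponent, rules out measures on singularities, and uses the ergodic closing lemma plus domination on the closed-up periodic orbits of nearby fields; all the steps check out (in particular your observation that the flow direction of a periodic orbit with a dominated splitting must lie exactly in one summand, hence in $F_\varepsilon$, is correct and makes the period inequality rigorous). What the paper's approach buys is economy: no perturbation, no genericity beyond what was already used, and no appeal to the closing lemma. What your approach buys is essentially nothing extra here; indeed the detour through closing is avoidable even within your framework, since once $\mu({\rm Sing}(X))=0$ Poincar\'e recurrence returns $\mu$-a.e.\ orbit to a compact regular region, and the same domination-versus-flow-direction inequality already forces $\int\log\|\Phi_T|_E\|\,d\mu\le-\lambda T<0$ directly --- which is exactly the paper's pointwise argument in measure-theoretic disguise.
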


\begin{proof}
First by Corollary~\ref{Cor:singular-flowdirection}, one has for every regular point $x$, $X(x)\in F(x)$. If
$\dim E=1$ is not true, one has $\dim E=2$. This means that regular points approximate $\sigma$ only in the
unstable subspace of $\sigma$, which contradicts to the fact that $C(\sigma)\cap
W^s(\sigma)\setminus\{\sigma\}\neq\emptyset$.

We will prove that $E$ is contracting. One notices that by Corollary~\ref{Cor:singular-flowdirection}, every
singularity $\sigma'$ in $C(\sigma)$ has index 2 and $E(\sigma')\subset E^s(\sigma')$. For any point
$x\in\Sigma$, there are two cases:
\begin{enumerate}

\item $\omega(x)\subset{\rm Sing}(X)$,

\item $\omega(x)\setminus {\rm Sing}(X)\neq\emptyset$.

\end{enumerate}

In the first case, one has there is $t_x>0$ such that $\|\Phi_{t_x}|_{E(x)}\|<1$. In the second case, one choose
$y\in \omega(x)\setminus{\rm Sing}(X)$. Take a small neighborhood $U_y$ of $y$ such that for any $y_1,y_2\in
U_y$, one has
$$\frac{1}{2}\le \frac{|X(y_1)|}{|X(y_2)|}\le 2.$$
Choose a sequence of times $\{t_n\}$ such that
\begin{itemize}
\item $\lim_{n\to\infty}t_n=\infty$.

\item $\phi_{t_n}(x)\in U_y$.

\end{itemize}

Thus,
$$\frac{|X(x)|}{|X(\phi_{t_n}(x))|}=\frac{|X(x)|}{|X(\phi_{t_1}(x))|}\frac{|X(\phi_{t_1}(x))|}{|X(\phi_{t_n}(x))|}\le 2\frac{|X(x)|}{|X(\phi_{t_1}(x))|}.$$
Since $E\oplus F$ is a dominated splitting and $X\subset F$, one has there are constants $\lambda<0$ and $C>0$
such that
$$\|\Phi_{t_n}|_{E(x)}\|\le C e^{\lambda t_n}\frac{|X(x)|}{|X(\phi_{t_n}(x))|}\le 2C e^{\lambda t_n}\frac{|X(x)|}{|X(\phi_{t_1}(x))|}.$$
When $n$ is large enough, one has $\|\Phi_{t_n}|_{E(x)}\|<1$

By summarizing the above arguments, in any case, one has for any $x\in C(\sigma)$, there is $t_x>0$ such that
$\|\Phi_{t_x}|_{E(x)}\|<1$. By a classical compact argument, one has $E$ is uniformly contracting.

\end{proof}

Since $\dim M=3$, every hyperbolic singularity in a non-trivial chain recurrent class has either index 1 or
index 2, Lemma~\ref{Lem:onebundlehyperbolic} ends the proof of Theorem~\ref{Thm:dominatedtopartial}.

We will manage to prove Theorem~\ref{Thm:partialtosingular} now.

\begin{proof}[\bf Proof of Theorem~\ref{Thm:partialtosingular}]
Without loss of generality, one can assume that every singularity in $C(\sigma)$ has index 2. Thus, $C(\sigma)$
is Lyapunov stable.
\begin{itemize}

\item By Lemma~\ref{Lem:onebundlehyperbolic}, $C(\sigma)$ has a partially hyperbolic splitting
$T_{C(\sigma)}M=E^s\oplus F$ with $\dim E^s=1$. Moreover, every singularity in $C(\sigma)$ has index 2.

\item By Lemma~\ref{Lem:sectionalcontractingisolated}, $I(\sigma')>0$ for any singularity $\sigma'\in
C(\sigma)$.

\end{itemize}

We will prove Theorem~\ref{Thm:partialtosingular} by absurd. If $C(\sigma)$ is not singular hyperbolic, then $F$
is not sectional expanding by Theorem B. By Lemma~\ref{Lem:approximatedbysink}, one has that there is a sequence
of sinks $\{P_n\}$ such that $\lim_{n\to\infty}P_n=\Lambda\subset C(\sigma)$. There are two cases:

\begin{enumerate}

\item $\Lambda$ contains a singularity $\sigma'\in C(\sigma)$.

\item $\Lambda\cap {\rm Sing}(X)=\emptyset$.

\end{enumerate}

In the second case, by Lemma~\ref{Lem:dominatedwithoutsingularity}, $\Lambda$ is a hyperbolic set. Then, either
$\Lambda$ is a sink, or $\Lambda$ cannot be accumulated by sinks. But if $\Lambda$ is a sink, $\Lambda$ cannot
be contained in the chain recurrent class $C(\sigma)$, which shows that the second case is impossible.

Now we are in the first case. Since $\Lambda$ contains a singularity $\sigma'\in C(\sigma)$, one has
$\Lambda\cap W^u(\sigma')\neq\emptyset$. Since $X$ is $C^1$ generic, one has that every separatrix of
$W^u(\sigma')$ is dense in $C(\sigma)$ by Lemma~\ref{Lem:generic}. Thus, $\Lambda=C(\sigma)$. As a corollary,
$\Lambda$ contains the hyperbolic periodic point $p$. Thus, there are $p_n\in P_n$ such that
$\lim_{n\to\infty}p_n=p$. Since $C(\sigma)$ is Lyapunov stable by Lemma~\ref{Lem:generic}, one has
$W^u_{loc}({\rm Orb}(p))\subset C(\sigma)$. It's clear that $W^u_{loc}({\rm Orb}(p))$ is a two-dimensional
manifold and transversal to the strong stable direction $E^s$. In particular, one has that $W^s(W^u_{loc}({\rm
Orb}(p)))$ contains ${\rm Orb}(p)$ as its interior. As a corollary, $\omega(x)\subset C(\sigma)$ for any $x\in
W^s(W^u_{loc}({\rm Orb}(p)))$ and $W^s(W^u_{loc}({\rm Orb}(p)))$ contains no sinks. This contradicts to the fact
that $p$ can be accumulated by sinks.

Now notice that one of the main theorems in \cite{MoP03} asserts that every singular hyperbolic chain recurrent
class with a singularity is an attractor or a repeller for three-dimensional flows.

\end{proof}

\vskip 5pt

\noindent Christian Bonatti

\noindent Institut de Math\'ematiques de Bourgogne, Universit\'e de Bourgogne(Dijon),  Dijon, 21004  FRANCE

\noindent bonatti@u-bourgogne.fr

\vskip 5pt

\noindent Shaobo Gan

\noindent School of Mathematics Sciences, Pekin University, Beijing, 100871, P.R.China

\noindent gansb@math.pku.edu.cn

\vskip 5pt

\noindent Dawei Yang

\noindent School of Mathematics, Jilin University, Changchun, 130012, P.R. China

\noindent yangdw1981@gmail.com

\end{document}